\newcommand{\N}{{\mathbb N}}
\newcommand{\R}{{\mathbb R}}
\renewcommand{\S}{{\mathbb S}}
\newcommand{\eps}{\varepsilon}
\numberwithin{equation}{section}
\newtheorem{theorem}{Theorem}[section]
\newtheorem{lemma}{Lemma}[section]
\newtheorem{proposition}{Proposition}[section]
\theoremstyle{definition}
\newtheorem{remark}{Remark}[section]
\begin{document}
	\title{Gelfand problem on a large spherical cap}
	\author{Yoshitsugu Kabeya}
	\address{Department of Mathematical Sciences, Osaka Prefecture University, Gakuencho, Sakai, 599-8531, Japan}
	\email{kabeya@ms.osakafu-u.ac.jp}
	\author{Vitaly Moroz}
	\address{Department of
		Mathematics, Swansea University, Fabian Way, Swansea SA1~8EN, Wales, UK}
	\email{v.moroz@swansea.ac.uk}
	\date{\today}
	
\keywords{Gelphand problem, spherical cap, torsion function, eigenvalues, spherical harmonics}
\subjclass[2010]{35J60; 33C55, 35R01}

\begin{abstract}
We study the behaviour of the minimal solution to the Gelfand problem on a spherical cap under the Dirichlet boundary conditions. The asymptotic behaviour of the solution is discussed as the cap approaches the whole sphere. The results are based on the sharp estimate of the torsion function of the spherical cap in terms of the principle eigenvalue which we derive in this work.
\end{abstract}

\maketitle
%\tableofcontents

\section{Introduction}

We consider the nonlinear problem
\begin{equation}\label{eq:Gelfand}
\left\{
\begin{aligned}
-\Delta_{\S^N} u&=\lambda f(u)\quad &&\mbox{in $\Omega$},\vspace{10pt}\\
u&=0\quad  &&\mbox{on $\partial\Omega$},
\end{aligned}
\right.
\end{equation}
where $\lambda>0$ is a parameter, $\Delta_{\S^N}$ denotes the Laplace-Beltrami operator on the unit sphere $\S^N\subset\R^{N+1}$ ($N\ge 1$) and $\Omega\subset \S^N$ is a sub-domain in $\S^N$ with a smooth boundary $\partial\Omega\neq\emptyset$. 
The principal Dirichlet eigenvalue of $-\Delta_{\S^N}$ in $\Omega$ is denoted by $\lambda_1(\Omega)>0$, and  $\varphi_{1,\Omega}$ denotes the corresponding positive Dirichlet eigenfunction normalized as $\|\varphi_{1,\Omega}\|_2=1$. By $w_\Omega$ we denote the torsion function of $\Omega$, that is the unique solution of the Dirichlet problem
\begin{equation}\label{eq:torsion}
\left\{
\begin{aligned}
-\Delta w_\Omega&=1\quad \mbox{in}\ \Omega,\vspace{10pt}\\
w_\Omega&=0\quad  \mbox{on}\ \partial\Omega.
\end{aligned}
\right.
\end{equation}
By the standard elliptic regularity, $w_\Omega\in C^2(\Omega)$.

We shall assume that the nonlinearity $f\in C^2(\R)$ is a convex monotone increasing function with $f(0)>0$ that satisfies the assumption
\begin{equation}\label{KO}
\lim_{s\to\infty}\frac{f(s)}{s}=+\infty.
\end{equation}

\noindent
Typical examples include
$$f(s)=\exp(s),\qquad f(s)=(1+s)^p\quad(p>1).$$
Problem \eqref{eq:Gelfand} with this type of nonlinearities is usually referred to as the Gelfand problem. It was 
introduced by Frank-Kamenetskii as a model of thermal explosion in a combustion vessel \cite{FK}, and
became known in the mathematical community due to the chapter written by Barenblatt in a survey by Gelfand \cite[Chapter 15]{Gelfand}.

We denote
\begin{equation}
a_*:=\min_{s>0}\frac{f(s)}{s},\qquad s_*:=\min\Big\{s>0:\frac{f(s)}{s}=a_*\Big\}.
\end{equation}
By convexity and since $f(0)>0$, we observe that $a_*>0$ and $s_*>0$.
Denote 
$$\lambda^*_\Omega:=\sup\big\{\lambda>0: \text{ \eqref{eq:Gelfand} has a classical positive solution}\big\}.$$
The following proposition is standard.

\begin{proposition}\label{p:1}
For each $\lambda\in(0,\lambda^*_\Omega)$, problem \eqref{eq:Gelfand} admits a unique minimal classical positive solution $u_{\lambda}$. Moreover,

\medskip\noindent 
	$i)$ 
	the following estimate holds,
	\begin{equation}\label{e-star}
	\frac{1}{a_*\|w_\Omega\|_\infty}\le\lambda^*_\Omega\le\frac{\lambda_1(\Omega)}{a_*}.
	\end{equation}

	\smallskip\noindent 
	$ii)$ For $\lambda=\lambda^*_\Omega$ problem \eqref{eq:Gelfand} admits a weak extremal solution $u^*>0$ defined as
	\begin{eqnarray}\label{eq:weaks}
	u^*(x):=\lim_{\lambda\to\lambda^*_\Omega} u_{\lambda}(x).
	\end{eqnarray}
	
	\smallskip\noindent 
	$iii)$ For $\lambda>\lambda^*_\Omega$ problem \eqref{eq:Gelfand} admits no weak solutions.
	
\end{proposition}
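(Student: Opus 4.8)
The plan is to transcribe the classical sub/super\-solution and weak-solution theory for Gelfand-type equations to the compact manifold $\S^N$; everything that is needed — interior and boundary Schauder estimates, the strong maximum principle, Hopf's lemma, the weak comparison principle for $L^1$ data, and the variational characterisation of $\lambda_1(\Omega)$ — holds on $\S^N$ verbatim. For the minimal solution, I would fix $\lambda\in(0,\lambda^*_\Omega)$: since $\lambda$ is not an upper bound for the set defining $\lambda^*_\Omega$, there is $\lambda_0\in(\lambda,\lambda^*_\Omega]$ admitting a classical positive solution $v$, which (as $f>0$) is a super\-solution at level $\lambda$, while $\underline u\equiv 0$ is a strict sub\-solution because $\lambda f(0)>0$. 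Running the monotone iteration $u_0:=0$, $-\Delta_{\S^N}u_{n+1}=\lambda f(u_n)$ in $\Omega$, $u_{n+1}=0$ on $\partial\Omega$, monotonicity of $f$ gives $0=u_0\le u_1\le\cdots\le v$; the $u_n$ are uniformly bounded and hence uniformly smooth by elliptic regularity, so $u_n\to u_\lambda$ in $C^2$ with $u_\lambda>0$ solving \eqref{eq:Gelfand}. Since the iteration stays below every positive super\-solution — in particular below every positive solution — $u_\lambda$ is minimal and unique as such; repeating the scheme for $\lambda'<\lambda$ against the super\-solution $u_\lambda$ shows $\lambda\mapsto u_\lambda$ is nondecreasing, a fact needed for (ii).

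For (i), the upper bound in \eqref{e-star} comes from $f(s)\ge a_*s$ $(s>0)$: testing $-\Delta_{\S^N}u_\lambda=\lambda f(u_\lambda)\ge\lambda a_*u_\lambda$ against $\varphi_{1,\Omega}>0$ and integrating by parts yields $\lambda_1(\Omega)\ge\lambda a_*$. For the lower bound I would use the explicit super\-solution $\bar u:=\frac{s_*}{\|w_\Omega\|_\infty}w_\Omega$: then $0\le\bar u\le s_*$ in $\Omega$, $\bar u=0$ on $\partial\Omega$, and since $f$ is increasing with $f(s_*)=a_*s_*$,
$-\Delta_{\S^N}\bar u=\frac{s_*}{\|w_\Omega\|_\infty}\ge\lambda f(s_*)\ge\lambda f(\bar u)$ as soon as $\lambda\le\frac{1}{a_*\|w_\Omega\|_\infty}$; the iteration against $\underline u\equiv 0$ then produces a classical solution for every such $\lambda$, so $\lambda^*_\Omega\ge\frac{1}{a_*\|w_\Omega\|_\infty}$.

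For (ii), monotonicity gives the pointwise limit $u^*(x):=\lim_{\lambda\uparrow\lambda^*_\Omega}u_\lambda(x)\in(0,+\infty]$, and the real task is to show it is finite a.e.\ and solves \eqref{eq:Gelfand} weakly at $\lambda=\lambda^*_\Omega$. I would test the equation against $\varphi_{1,\Omega}$, obtaining $\lambda_1(\Omega)\int_\Omega u_\lambda\varphi_{1,\Omega}=\lambda\int_\Omega f(u_\lambda)\varphi_{1,\Omega}$, and use \eqref{KO} to fix $A:=3\lambda_1(\Omega)/\lambda^*_\Omega$ with $f(s)\ge As-C_A$ for all $s\ge 0$; for $\lambda\ge\lambda^*_\Omega/2$ this forces $\int_\Omega u_\lambda\varphi_{1,\Omega}$, and then $\int_\Omega f(u_\lambda)\varphi_{1,\Omega}$, to be bounded uniformly in $\lambda$. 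Monotone convergence gives $u^*\varphi_{1,\Omega},\,f(u^*)\varphi_{1,\Omega}\in L^1(\Omega)$, and since $\varphi_{1,\Omega}$ is comparable to $\mathrm{dist}_{\S^N}(\cdot,\partial\Omega)$ near $\partial\Omega$ (Hopf), this places $f(u^*)$ in the weighted $L^1$ space appropriate for weak solutions; passing to the limit $\lambda\uparrow\lambda^*_\Omega$ in the weak formulation of \eqref{eq:Gelfand} for $u_\lambda$ (tested against $C^2(\overline\Omega)$ functions vanishing on $\partial\Omega$), with monotone convergence on the right and dominated convergence on the left, yields \eqref{eq:weaks}. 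Finally, for (iii) I would argue by contradiction: if \eqref{eq:Gelfand} had a weak solution $u>0$ for some $\bar\lambda>\lambda^*_\Omega$, then for $\lambda\in(\lambda^*_\Omega,\bar\lambda)$ the function $\tfrac{\lambda}{\bar\lambda}u$ is, by monotonicity of $f$, a weak super\-solution at level $\lambda$; the monotone iteration from $0$ dominated by it gives a minimal weak solution at level $\lambda$, and the strict gap $\lambda<\bar\lambda$ is what upgrades it to a bounded — hence, by elliptic regularity, classical — solution, via the standard truncation argument of Brezis--Cazenave--Martel--Ramiandrisoa / Martel that uses only convexity of $f$. This contradicts the definition of $\lambda^*_\Omega$.

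The routine parts are the minimal-solution construction and the a priori estimate in (ii); the step I expect to be the main obstacle is the boundedness, and hence classical regularity, of the minimal weak solution arising from a weak super\-solution at a strictly larger parameter — the precise mechanism ruling out weak solutions beyond $\lambda^*_\Omega$ in (iii) — together with checking that its transcription to $\S^N$ preserves all the local estimates.
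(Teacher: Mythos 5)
Your proposal is correct and follows essentially the same route as the paper: monotone iteration below a supersolution for the minimal solution, the explicit supersolution $s_*w_\Omega/\|w_\Omega\|_\infty$ for the lower bound in \eqref{e-star}, testing against $\varphi_{1,\Omega}$ with $f(s)\ge a_*s$ for the upper bound, and the Brezis--Cazenave--Martel--Ramiandrisoa/Dupaigne arguments (via \eqref{KO} and convexity) for the extremal weak solution and the nonexistence beyond $\lambda^*_\Omega$. The only difference is that you spell out some of the standard steps in $ii)$ and $iii)$ which the paper simply cites from the literature.
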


For the precise definition of the weak solution see \eqref{e-weak}.
In the case when $\Omega$ is a bounded smooth domain in $\R^N$, Proposition \ref{p:1} was essentially proved in Gelfand~\cite{Gelfand}, Keller and Cohen~\cite{Keller67}, Sattinger~\cite{Satt71}, Joseph and Lundgren~\cite{JL72}, Keener and Keller~\cite{Keller74}, Crandall and Rabinowitz~\cite{CR75}. The lower bound in terms of the torsion function in \eqref{e-star} appeared in Bandle~\cite[Theorem 1.1]{Bandle75}, while the upper bound is found in Brezis, Cazenave, Martel and Ramiandrisoa~\cite[Lemma 5]{Brezis96}.
Nonexistence of weak solutions for $\lambda>\lambda^*_\Omega$ is the result in \cite[Corollary 2]{Brezis96}.
We refer to Dupaigne~\cite[Section 3]{Dupaigne} for a detailed exposition and further references.

\begin{remark}\label{r-tor}
	The bound \eqref{e-star} implies implicitly that
	\begin{equation}\label{e-tor-eigen}
	\|w_\Omega\|_\infty>\frac1{\lambda_1(\Omega)}.
	\end{equation}
	It is easy to prove this directly. Indeed, let $\varphi\in C^\infty_0(\Omega)$. Testing \eqref{eq:torsion} against $\frac{\varphi^2}{w_\Omega}$ and using Picone's identity, we conclude that
	\begin{equation}\label{AAP}
	\int_\Omega|\nabla\varphi|^2dS\ge\int_\Omega\frac{\varphi^2}{w_\Omega}dS,
	\end{equation}
	cf.~Agmon \cite[Theorem 3.3]{Agmon} for Riemannian manifolds setting, or Liskevich, Lyakhova and Moroz \cite[Lemma A.9]{LLM}.
	By density, \eqref{AAP} is also valid for all $\varphi\in H^1_0(\Omega)$.
	Then taking $\varphi_{1,\Omega}$ as a test function in \eqref{AAP} and using $\|\varphi_{1,\Omega}\|_2=1$ and the fact that $w_\Omega$ is non-constant, we conclude that
	$$\lambda_1(\Omega)=\int_\Omega|\nabla\varphi_{1,\Omega}|^2dS\ge\int_\Omega\frac{\varphi_{1,\Omega}^2}{w_\Omega}dS>
	\frac{1}{\|w_\Omega\|_\infty}.$$
\end{remark}

\begin{remark}
	As a bi-product of the proof of \eqref{e-star}, we actually establish that the following pointwise estimate holds
	\begin{equation}\label{e-bound}
	\lambda f(0) w_\Omega\le u_\lambda\le s_*\frac{w_\Omega}{\|w_\Omega\|_\infty},
	\end{equation}
	for all $\lambda\le\frac{1}{a_*\|w_\Omega\|_\infty}$. 
\end{remark}

In this work we are primarily interested in the special case when 
\begin{equation}\label{eq:cap}
\Omega_{\eps}:=\Big\{(x_1,x_2,\dots,x_{N+1})\in\R^{N+1}\, \big|\, \sum_{i=1}^{N+1}x_i^2=1,\ \cos((1-\eps)\pi)<x_{N+1}\le 1\Big\}.
\end{equation}
is a geodesic ball, also called a ``spherical cap'', centred at the North Pole of $\S^N$,
and where $\eps\in (0,1)$. Throughout the paper, for $\eps \ll 1$ and $f(\eps), g(\eps) \geq 0$, we use the following asymptotic notation:

\smallskip
$f(\eps)\lesssim g(\eps)$ if there exists $C>0$ independent of $\eps$
such that $f(\eps) \le C g(\eps)$;

\smallskip
$f(\eps)\simeq g(\eps)$ if $f(\eps)\lesssim g(\eps)$ and
$g(\eps)\lesssim f(\eps)$;

\smallskip\noindent
We also use the standard Landau symbols $f = O(g)$ and $f = o(g)$, with the understanding that $f \geq 0$ and $g \geq 0$. As usual, $C,c,c_1$, etc., denote generic positive constants
independent of $\eps$.

It is known by Bandle, Kabeya and Ninomiya~\cite{BKN}, Kabeya, Kawakami, Kosaka and Ninomiya~\cite{3KN}, Kosaka~\cite{Ko} and Macdonald~\cite{McD} (we will review this in Sections~\ref{sec:torsion}),
that the principle eigenvalue of $\Omega_\eps$ satisfy the following asymptotic bound, 
\begin{equation}\label{eq:lambda1}
\lambda_1(\Omega_\eps)=
\left\{
\begin{array}{cl}
c_N\eps^{N-2}(1+o(1))&\quad\mbox{if $N\ge 4$},\vspace{10pt}\\
2\eps(1+o(1))&\quad\mbox{if $N=3$},\vspace{10pt}\\
\frac{1}{2\log\left(\frac{2}{\pi\eps}\right)}(1+o(1))&\quad\mbox{if $N=2$},
\end{array}
\right.
\end{equation}
where coefficients $c_N>0$ do not depend on $\eps>0$ and could be computed explicitly as in Bandle, Kabeya and Ninomiya~ \cite{BKN}.  

In this work we establish the following new estimate on $\|w_{\Omega_\eps}\|_{\infty}$ which shows that the lower bound \eqref{e-tor-eigen} is asymptotically sharp as $\eps\to 0$, and which we believe is of independent interest.

\begin{proposition}\label{prop:torsion} 
Let $N\ge 2$. As $\eps\to 0$, the torsion function $w_{\Omega_\eps}$ of the spherical cap $\Omega_\eps$ satisfies
\begin{equation}\label{eq:torsion0}
\|w_{\Omega_\eps}\|_\infty=\frac{1}{\lambda_1(\Omega_\eps)}+O(1).
\end{equation}
\end{proposition}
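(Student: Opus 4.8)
We already know from Remark \ref{r-tor} that $\|w_{\Omega_\eps}\|_\infty > 1/\lambda_1(\Omega_\eps)$, so the lower bound in \eqref{eq:torsion0} is immediate (indeed with error $0$, not just $O(1)$). The whole content is the matching upper bound
$$\|w_{\Omega_\eps}\|_\infty \le \frac{1}{\lambda_1(\Omega_\eps)} + O(1).$$
The natural strategy is to compare $w_{\Omega_\eps}$ with (a multiple of) the principal eigenfunction $\varphi_{1,\Omega_\eps}$, suitably normalized. Write $\lambda_1 = \lambda_1(\Omega_\eps)$, $\varphi_1 = \varphi_{1,\Omega_\eps}$. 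Since $-\Delta_{\S^N}\varphi_1 = \lambda_1\varphi_1$, the function $v := \frac{1}{\lambda_1}\cdot\frac{\varphi_1}{\|\varphi_1\|_\infty}$ solves $-\Delta_{\S^N} v = \frac{\varphi_1}{\|\varphi_1\|_\infty} \le 1$; so $v$ is a subsolution of the torsion equation and $w_{\Omega_\eps} \ge v$, giving again the (useless direction) lower bound. To get the upper bound one instead needs to control the region where $\varphi_1/\|\varphi_1\|_\infty$ is not close to $1$: write $w := w_{\Omega_\eps}$, $\psi := \varphi_1/\|\varphi_1\|_\infty$, and estimate $w - \frac{1}{\lambda_1}\psi$. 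This difference solves $-\Delta_{\S^N}(w - \tfrac{1}{\lambda_1}\psi) = 1 - \psi \ge 0$ in $\Omega_\eps$ with zero boundary data, hence is nonnegative; but we want an upper bound on it. The point is that $1-\psi$ is small (close to $0$) on the bulk of the cap and only of order $1$ near the boundary, within a geodesic collar of width $\simeq$ the "thin" scale. So I would bound $w - \frac{1}{\lambda_1}\psi$ by the torsion function of a slightly larger domain where the source $1-\psi$ is replaced by its sup, localized, or more simply: by the maximum principle, $w - \frac{1}{\lambda_1}\psi \le w_{\Omega_\eps}$ trivially — which is circular — so one genuinely needs a quantitative handle, e.g. comparison with an explicit radial barrier on the cap that is $O(1)$ in sup norm and solves $-\Delta_{\S^N}(\text{barrier}) \ge 1 - \psi$.

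Concretely, I would exploit the radial symmetry of $\Omega_\eps$. In geodesic polar coordinates centred at the North Pole, writing $\theta\in[0,(1-\eps)\pi)$ for the polar angle, both $w$ and $\varphi_1$ are radial and satisfy ODEs of the form $-(\sin^{N-1}\theta\, g')' = \sin^{N-1}\theta\cdot(\text{RHS})$. The eigenfunction $\varphi_1$, as reviewed in Section \ref{sec:torsion}, is (up to normalization) an associated Legendre / Gegenbauer-type function of order depending on $\lambda_1$; its profile $\psi(\theta)$ decreases from $1$ at $\theta=0$ to $0$ at $\theta=(1-\eps)\pi$. The plan is then: (1) show $\psi(\theta) \ge 1 - C\lambda_1$ for $\theta$ in the bulk $[0,(1-\eps)\pi - \delta]$ with an appropriate $\delta$, using the explicit special-function representation and the known asymptotics \eqref{eq:lambda1}; (2) on the remaining collar $[(1-\eps)\pi-\delta,(1-\eps)\pi)$, bound $w(\theta)$ directly by the one-dimensional torsion function of an interval of length $\delta$, which is $O(\delta^2) = O(1)$; (3) combine via the maximum principle on $\Omega_\eps$: the function $\frac{1}{\lambda_1}\psi + C\lambda_1\cdot\frac{1}{\lambda_1} + (\text{collar barrier})$ dominates $w$. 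Since $\|w\|_\infty = w(0)$ (by radial monotonicity, which follows from the maximum principle) and $\psi(0)=1$, this yields $w(0) \le \frac{1}{\lambda_1} + O(1)$.

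The main obstacle is step (1)/(2): getting a clean quantitative estimate on how close the normalized eigenfunction $\psi$ is to $1$ on the bulk, uniformly in $\eps$, and simultaneously controlling the torsion contribution from the thin collar near $\partial\Omega_\eps$. The difficulty is genuinely $N$-dependent, because the geometry of the "thin" part of the cap — and the corresponding asymptotics \eqref{eq:lambda1} — change character at $N=2$, $N=3$, $N\ge 4$; in particular for $N=2$ the logarithmic scale means the collar where $\psi$ is not $\approx 1$ is not simply "small", and one must be careful that the collar torsion barrier is still $O(1)$ rather than $O(\log(1/\eps))$. I expect the cleanest route is to avoid delicate pointwise special-function estimates and instead integrate the radial ODE for $w$ explicitly — $w(\theta) = \int_\theta^{(1-\eps)\pi}\sin^{1-N}t\big(\int_0^t \sin^{N-1}s\,ds\big)dt$ — and likewise use an integral representation for $\varphi_1$, then estimate $w(0) - \frac{1}{\lambda_1}$ directly as a single integral expression and show it is $O(1)$ by splitting the $\theta$-integral at the scale dictated by \eqref{eq:lambda1} and using $1-\psi = O(\lambda_1\cdot(\text{something integrable}))$ on the bulk. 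This reduces everything to one-variable estimates that can be done case by case in $N$, matching the three regimes in \eqref{eq:lambda1}.
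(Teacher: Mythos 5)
Your strategy is genuinely different from the paper's: there, \eqref{eq:torsion0} is proved for $N=2,3$ by explicit formulas and for $N\ge4$ by expanding $w_{\Omega_\eps}$ in the Dirichlet eigenbasis \eqref{sol}, showing that only the ``radial'' modes contribute, and computing $(1,\varphi_j)$ asymptotically via associated Legendre functions, the definite integral formula \eqref{eq:definite} and Gamma-function asymptotics (Lemmas \ref{le:conv} and \ref{prop:1}). Your comparison of $w:=w_{\Omega_\eps}$ with $\frac1{\lambda_1}\psi$, $\psi:=\varphi_1/\|\varphi_1\|_\infty$, is more elementary and can be made to work, but the plan as written has a genuine quantitative gap: steps (1)--(2) do not combine. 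On a collar of width $\delta$ you cannot bound $w$ by the torsion function of an interval (since $w$ does not vanish on the inner collar boundary); the quantity you actually need is the value at the North Pole of the defect $h:=w-\frac1{\lambda_1}\psi$, which solves $-\Delta_{\S^N}h=1-\psi$ with zero boundary data. With only the information ``$\,1-\psi\le C\lambda_1$ on the bulk, $1-\psi\le1$ on the collar'', the collar's contribution to $h(0)$ can only be bounded by a quantity of order $\delta^{N}\eps^{2-N}$ for $N\ge3$ (resp.\ $\delta^{2}\log\frac1\eps$ for $N=2$), which diverges for fixed $\delta$; and shrinking $\delta$ destroys the constant bound on the bulk. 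What is missing is a bound on $1-\psi$ that \emph{decays} with the distance to $\partial\Omega_\eps$, which your write-up only gestures at; the crude substitute $1-\psi\le\lambda_1\|w\|_\infty$ is, as you note, circular and in any case loses the sharp constant.

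The missing ingredient is supplied by one double integration of the radial ODE. Since $\varphi_1$ is radial and decreasing (Lemma \ref{le:conv}), $0\le\psi\le1$, $\psi(0)=1$, and $-(\sin^{N-1}\theta\,\psi')'=\lambda_1\sin^{N-1}\theta\,\psi$ gives
\[
1-\psi(\theta)\;\le\;\lambda_1 W(\theta),\qquad W(\theta):=\int_0^{\theta}\sin^{1-N}t\Big(\int_0^{t}\sin^{N-1}s\,ds\Big)dt ,
\]
where $W=w(0)-w$ is explicit, so no a priori knowledge of $\|w\|_\infty$ enters; note $W(\theta)\lesssim1+(\pi-\theta)^{2-N}$ for $N\ge3$ and $W(\theta)\lesssim1+\log\frac1{\pi-\theta}$ for $N=2$. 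Feeding this into the same radial representation for $h$,
\[
h(0)=\int_0^{(1-\eps)\pi}\sin^{1-N}t\Big(\int_0^{t}\sin^{N-1}s\,\bigl(1-\psi(s)\bigr)ds\Big)dt
\;\le\;\lambda_1\int_0^{(1-\eps)\pi}\sin^{1-N}t\Big(\int_0^{t}\sin^{N-1}s\,W(s)\,ds\Big)dt,
\]
the inner integrand is $\lesssim\min\{s^{N+1},\,\pi-s\}$ (up to a logarithm when $N=2$), so the inner integral is bounded uniformly in $t$, and the right-hand side is $\lesssim\lambda_1\bigl(1+\eps^{2-N}\bigr)$ for $N\ge3$ and $\lesssim\lambda_1\bigl(1+\log\frac1\eps\bigr)$ for $N=2$; by \eqref{eq:lambda1} this is $O(1)$ in every regime, including the logarithmic case $N=2$ you were worried about. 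Combined with $\|w\|_\infty=w(0)$, $\psi(0)=1$ and the lower bound of Remark \ref{r-tor}, this gives \eqref{eq:torsion0}. So your final ``direct integral'' route is the right one and is a legitimately different, softer proof than the paper's special-function computation, but the pointwise estimate $1-\psi\le\lambda_1W$ must be stated and proved; without it, steps (1)--(3) as formulated do not close.
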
 

Using \eqref{eq:torsion0}, from \eqref{e-star} we deduce a sharp
asymptotic estimate on the critical parameter $\lambda^*_{\Omega_\eps}$ in the Gelfand problem \eqref{eq:Gelfand}.

\begin{theorem} 
Let $N\ge 2$. As $\eps\to 0$,
\begin{equation}\label{eq:torsion-bound}
\lambda^*_{\Omega_\eps}=\frac{\lambda_1(\Omega_\eps)}{a_*}(1+o(1)).
\end{equation}
\end{theorem}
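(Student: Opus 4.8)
The plan is to combine the two-sided bound \eqref{e-star} from Proposition~\ref{p:1} with the sharp torsion estimate \eqref{eq:torsion0} from Proposition~\ref{prop:torsion}. Recall that \eqref{e-star} gives
\[
\frac{1}{a_*\|w_{\Omega_\eps}\|_\infty}\le\lambda^*_{\Omega_\eps}\le\frac{\lambda_1(\Omega_\eps)}{a_*},
\]
so the upper bound is already of the desired form and only the lower bound requires work. For this we substitute \eqref{eq:torsion0}, namely $\|w_{\Omega_\eps}\|_\infty=\frac{1}{\lambda_1(\Omega_\eps)}+O(1)=\frac{1}{\lambda_1(\Omega_\eps)}\big(1+O(\lambda_1(\Omega_\eps))\big)$, into the lower bound, obtaining
\[
\lambda^*_{\Omega_\eps}\ge\frac{1}{a_*\|w_{\Omega_\eps}\|_\infty}=\frac{\lambda_1(\Omega_\eps)}{a_*}\cdot\frac{1}{1+O(\lambda_1(\Omega_\eps))}=\frac{\lambda_1(\Omega_\eps)}{a_*}\big(1+o(1)\big).
\]

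The one genuine point to check is that $\lambda_1(\Omega_\eps)\to 0$ as $\eps\to 0$, so that the error term $O(\lambda_1(\Omega_\eps))$ in the denominator is indeed $o(1)$ and the geometric-series expansion $\frac{1}{1+O(\lambda_1(\Omega_\eps))}=1+o(1)$ is legitimate. This is immediate from the asymptotics \eqref{eq:lambda1}: for $N\ge 4$ one has $\lambda_1(\Omega_\eps)\simeq\eps^{N-2}\to 0$, for $N=3$ one has $\lambda_1(\Omega_\eps)\simeq\eps\to 0$, and for $N=2$ one has $\lambda_1(\Omega_\eps)\simeq 1/\log(1/\eps)\to 0$. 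In all cases $\lambda_1(\Omega_\eps)=o(1)$, so $O(1)\cdot\lambda_1(\Omega_\eps)=o(1)$, which is exactly what is needed.

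Combining the upper bound $\lambda^*_{\Omega_\eps}\le\frac{\lambda_1(\Omega_\eps)}{a_*}=\frac{\lambda_1(\Omega_\eps)}{a_*}(1+o(1))$ with the lower bound just derived yields \eqref{eq:torsion-bound}. I expect no real obstacle here: all the substantive difficulty has been packaged into Proposition~\ref{prop:torsion}, whose proof (establishing that the lower bound \eqref{e-tor-eigen} is asymptotically sharp for spherical caps) is the technical heart of the paper and will presumably be carried out in Section~\ref{sec:torsion} using the explicit structure of the cap and spherical harmonics; the present theorem is then a short corollary. The only mild care needed is to state the estimates uniformly in $\eps$ and to note that $a_*>0$ is a fixed constant depending only on $f$, so it does not interfere with the asymptotics.
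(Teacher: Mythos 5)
Your argument is correct and is essentially the paper's own deduction: the theorem is obtained exactly by combining the two-sided bound \eqref{e-star} with the torsion estimate \eqref{eq:torsion0} and the fact, from \eqref{eq:lambda1}, that $\lambda_1(\Omega_\eps)\to 0$ as $\eps\to 0$, so the relative error in the lower bound is $O(\lambda_1(\Omega_\eps))=o(1)$. Nothing further is needed.
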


The paper is organised as follows. In Section 2 we sketch the proof of Proposition \ref{p:1}.
In Section 3 we review several fundamental properties of the special functions
and outline the derivation of the torsion function estimates which lead to the proof of Proposition \ref{prop:torsion}. In Section 4 we prove a technical Lemma \ref{prop:1}.  Finally, in Sections 5, 6 
we discuss several technical properties of the special functions which were used in the earlier proofs.
\medskip

\section{Proof of Proposition \ref{p:1}}

The proof of Proposition \ref{p:1} is standard, cf.~ Dupaigne~\cite[Propositions 3.3.1, 3.3.2]{Dupaigne} for a detailed exposition in the case of bounded smooth subdomains of $\R^N$. We present a sketch of the proof for completeness.

\begin{lemma} \label{l:int1}
	Assume \eqref{eq:Gelfand} admits positive,
	classical super-solution $\bar u>0$. Then \eqref{eq:Gelfand} admits a unique minimal,  
	positive classical solution $u_{\lambda}\in C^2 (\Omega)$, such that
	\begin{equation}\label{e-bound-l}
	\lambda f(0) w_\Omega\le u_\lambda\le \bar u.
	\end{equation}
\end{lemma}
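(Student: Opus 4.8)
The plan is to prove Lemma \ref{l:int1} by the classical monotone iteration (method of sub- and super-solutions), starting from the sub-solution $\underline u := \lambda f(0) w_\Omega$ and the given super-solution $\bar u$. First I would check that $\underline u$ is indeed a sub-solution: since $f$ is monotone increasing and $\underline u \geq 0$, we have $f(\underline u) \geq f(0)$, so $-\Delta_{\S^N} \underline u = \lambda f(0) = \lambda f(0) \cdot 1 \leq \lambda f(\underline u)$ in $\Omega$, with $\underline u = 0$ on $\partial\Omega$; and $\underline u \geq 0$ by the maximum principle applied to \eqref{eq:torsion}. Next I would verify the ordering $\underline u \leq \bar u$: the function $\bar u$ satisfies $-\Delta_{\S^N}\bar u \geq \lambda f(\bar u) \geq \lambda f(0) > 0$ (here convexity is not even needed, only monotonicity and $f(0)>0$), so $\bar u$ is a positive super-solution of \eqref{eq:torsion} scaled by $\lambda f(0)$, and the comparison principle gives $\bar u \geq \lambda f(0) w_\Omega = \underline u$.

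Then I would set up the iteration scheme. Fix a constant $M>0$ large enough that $s\mapsto \lambda f(s) + Ms$ is monotone increasing on the interval $[0,\|\bar u\|_\infty]$ (possible since $f\in C^2$, so $f'$ is bounded on compacts; take $M > \lambda \sup_{[0,\|\bar u\|_\infty]} |f'|$). Define $u_0 := \underline u$ and inductively let $u_{k+1}$ solve the linear problem $-\Delta_{\S^N} u_{k+1} + M u_{k+1} = \lambda f(u_k) + M u_k$ in $\Omega$, $u_{k+1} = 0$ on $\partial\Omega$; each $u_k$ is well-defined and in $C^2(\Omega)$ by standard linear elliptic theory on the smooth manifold $\S^N$ (the operator $-\Delta_{\S^N}+M$ is coercive). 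Using the maximum principle for $-\Delta_{\S^N}+M$ together with the monotonicity of $s\mapsto \lambda f(s)+Ms$ and the sub/super-solution properties, one shows by induction the monotone chain $\underline u = u_0 \leq u_1 \leq \cdots \leq u_k \leq u_{k+1} \leq \cdots \leq \bar u$. Hence $u_\lambda(x) := \lim_{k\to\infty} u_k(x)$ exists pointwise, and by monotone convergence plus elliptic $L^p$ and Schauder estimates (the right-hand sides $\lambda f(u_k)+Mu_k$ are uniformly bounded in $L^\infty$, hence $u_k$ bounded in $C^{1,\alpha}$, hence the right-hand sides bounded in $C^\alpha$, hence $u_k$ bounded in $C^{2,\alpha}$), a subsequence converges in $C^2(\Omega)$ to a classical solution $u_\lambda$ of \eqref{eq:Gelfand} satisfying $\underline u \leq u_\lambda \leq \bar u$, which is \eqref{e-bound-l}.

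For minimality, I would argue that if $v>0$ is any positive classical solution of \eqref{eq:Gelfand}, then $v$ is in particular a super-solution, so the same argument as above (with $\bar u$ replaced by $v$) shows $u_0 = \underline u \leq v$, and then inductively $u_k \leq v$ for all $k$ — the key point being that the iteration started from $\underline u$ lies below \emph{every} super-solution — whence $u_\lambda \leq v$. Uniqueness of the minimal solution is immediate from minimality (two minimal solutions are mutually $\leq$).

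The main obstacle, and the only place requiring genuine care rather than bookkeeping, is ensuring all the linear elliptic machinery — existence/uniqueness for $-\Delta_{\S^N}+M$, the maximum and comparison principles, and the $L^p$/Schauder a priori estimates used to upgrade pointwise convergence to $C^2$ convergence — is available in the Riemannian setting of the smooth compact manifold $\S^N$ with the smooth subdomain $\Omega$. This is entirely standard (e.g.\ via local charts and a partition of unity, or by citing the manifold versions in Gilbarg–Trudinger-type references), so I would simply invoke it; since the statement explicitly allows us to use results stated earlier in the excerpt and the paper already asserts $w_\Omega\in C^2(\Omega)$ by standard elliptic regularity, there is no real difficulty here. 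I would remark only that convexity of $f$ plays no role in Lemma \ref{l:int1} itself — it is monotonicity, $f\in C^2$, and $f(0)>0$ that are used — convexity entering later in Proposition \ref{p:1} for the structure of the solution branch and the definition of $a_*, s_*$.
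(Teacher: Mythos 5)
Your proposal is correct and follows essentially the same route as the paper: a monotone iteration between the sub-solution $\lambda f(0)w_\Omega$ and the given super-solution $\bar u$, yielding the bounds \eqref{e-bound-l} and minimality. The only cosmetic differences are that the paper iterates $-\Delta_{\S^N}u_n=\lambda f(u_{n-1})$ starting from $u_0=0$ (using monotonicity of $f$ instead of your shift by $M$) and delegates the convergence, regularity and comparison-with-$\bar u$ details to Sattinger, whereas you spell them out.
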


\begin{proof}
	The minimal solution $u_{\lambda}$ of \eqref{eq:Gelfand} could be obtained by a monotone iteration arguments.
	Namely, for $n\in\N$ consider a sequence of functions $\{u_n\}_{n=0}^{\infty}$ with 
	$u_0=0$ and $u_n$ defined as
	\begin{eqnarray}\label{eq:app2c}
	\left\{
	\begin{array}{rclcc}
	-\Delta_{\S^N} u_n &= &\lambda f( u_{n-1})& \mbox{in} & \Omega,\\
	u_{n} &= &0  & \mbox{on} & \partial \Omega.
	\end{array}
	\right.
	\end{eqnarray}
	For each $n$ problem \eqref{eq:app2c} 
	admits a unique solution $u_n\in C^2(\bar\Omega)$ and $u_{n}\ge u_{n-1}$ in $\Omega$ for every $n\in\N$. In particular,
	$$u_n\ge u_1=\lambda f(0)w_\Omega.$$
	We now define the minimal solution of \eqref{eq:Gelfand} as
	\begin{eqnarray}\label{eq:app2d}
	u_{\lambda}(x):=\lim_{n\to\infty} u_n(x).
	\end{eqnarray}
	By the arguments similar to Sattinger~\cite[Theorem 2.1]{Satt71} we conclude that  $u_{\lambda}$ defined by \eqref{eq:app2d} satisfies  $\bar u\ge u_{\lambda}>0$
	in $\Omega$, 	belongs to $C^2(\bar \Omega)$ and solves \eqref{eq:Gelfand} classically.
\end{proof}

The next lemma uses the notion of a weak solution. Following Brezis, Cazenave, Martel and Ramiandrisoa~\cite{Brezis96}, we define a weak solution of \eqref{eq:Gelfand} as  a non-negative function $u\in L^1(\Omega)$ such
that $f(u) \delta_{\partial\Omega} \in L^1(\Omega),$ where $\delta_{\partial\Omega}(x):={\rm dist}(x,\partial\Omega)$ is the distance from $x$ to the boundary of $\Omega$ and
\begin{eqnarray}\label{e-weak}
-\int_{\Omega} u \Delta_{\S^N}\varphi dS=\lambda\int_{\Omega}f(u) \varphi dS,
\end{eqnarray}
for all $\varphi\in C^2(\bar\Omega)$ with $\varphi=0$ on $\partial\Omega$.

\begin{lemma} \label{l:int2}
	Problem \eqref{eq:Gelfand} admits a minimal classical solution $u_{\lambda}$
	for $0<\lambda<\lambda^*_\Omega<\infty$. Moreover, the extremal solution
	$$
	u^*(x):=\lim_{\lambda\to\lambda^*_\Omega} u_{\lambda}(x),
	$$
	is a weak solution of \eqref{eq:Gelfand}.
\end{lemma}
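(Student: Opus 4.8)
The plan is to run the classical argument of Crandall--Rabinowitz and Brezis--Cazenave--Martel--Ramiandrisoa (cf.\ the exposition in Dupaigne), now on $\S^N$, in three steps: finiteness of $\lambda^*_\Omega$ together with construction of the minimal solutions for $0<\lambda<\lambda^*_\Omega$; a uniform a priori $L^1$-type bound on $u_\lambda$ as $\lambda\to\lambda^*_\Omega$; and a limiting procedure.

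\textbf{Finiteness of $\lambda^*_\Omega$ and minimal solutions.} First I would test \eqref{eq:Gelfand} against $\varphi_{1,\Omega}>0$ and integrate by parts: since $u_\lambda=\varphi_{1,\Omega}=0$ on $\partial\Omega$, all boundary terms vanish and $\lambda_1(\Omega)\int_\Omega u_\lambda\varphi_{1,\Omega}\,dS=\lambda\int_\Omega f(u_\lambda)\varphi_{1,\Omega}\,dS\ge\lambda a_*\int_\Omega u_\lambda\varphi_{1,\Omega}\,dS$, using $f(s)\ge a_*s$ for $s>0$. As $\int_\Omega u_\lambda\varphi_{1,\Omega}\,dS>0$, this forces $\lambda\le\lambda_1(\Omega)/a_*$, hence $\lambda^*_\Omega\le\lambda_1(\Omega)/a_*<\infty$; and $\lambda^*_\Omega>0$ because $\bar u:=s_*w_\Omega/\|w_\Omega\|_\infty$ is a positive classical super-solution whenever $\lambda\le1/(a_*\|w_\Omega\|_\infty)$, so Lemma~\ref{l:int1} applies. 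Next, fixing $0<\lambda<\lambda^*_\Omega$, by definition of the supremum there is $\tilde\lambda\in(\lambda,\lambda^*_\Omega)$ for which \eqref{eq:Gelfand} has a classical positive solution $u_{\tilde\lambda}$; since $\tilde\lambda>\lambda$ and $f>0$, $u_{\tilde\lambda}$ is a positive classical super-solution of \eqref{eq:Gelfand} at level $\lambda$, and Lemma~\ref{l:int1} yields the minimal classical solution $u_\lambda$ with $\lambda f(0)w_\Omega\le u_\lambda\le u_{\tilde\lambda}$. The same comparison (for $\lambda<\lambda'<\lambda^*_\Omega$, $u_{\lambda'}$ is a super-solution at level $\lambda$, so $u_\lambda\le u_{\lambda'}$ by minimality) shows $\lambda\mapsto u_\lambda(x)$ is nondecreasing, so the pointwise limit $u^*(x)=\lim_{\lambda\to\lambda^*_\Omega}u_\lambda(x)=\sup_{\lambda<\lambda^*_\Omega}u_\lambda(x)$ exists in $[0,\infty]$.

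\textbf{The a priori estimate (the crux).} The key is to bound $\int_\Omega f(u_\lambda)\varphi_{1,\Omega}\,dS$ uniformly as $\lambda\to\lambda^*_\Omega$. Using the superlinearity \eqref{KO} I would fix $R>0$ with $f(s)\ge\frac{4\lambda_1(\Omega)}{\lambda^*_\Omega}s$ for all $s\ge R$, so that $u_\lambda\le\frac{\lambda}{2\lambda_1(\Omega)}f(u_\lambda)$ on $\{u_\lambda\ge R\}$ for every $\lambda\in[\tfrac12\lambda^*_\Omega,\lambda^*_\Omega)$. Testing the equation for $u_\lambda$ against $\varphi_{1,\Omega}$ gives $\lambda_1(\Omega)\int_\Omega u_\lambda\varphi_{1,\Omega}\,dS=\lambda\int_\Omega f(u_\lambda)\varphi_{1,\Omega}\,dS$; splitting $\Omega$ according to whether $u_\lambda<R$ or $u_\lambda\ge R$, bounding $u_\lambda\le R$ on the first set and using the above inequality on the second, and absorbing, I obtain $\int_\Omega f(u_\lambda)\varphi_{1,\Omega}\,dS\lesssim R\int_\Omega\varphi_{1,\Omega}\,dS$ uniformly in $\lambda$, hence also $\int_\Omega u_\lambda\varphi_{1,\Omega}\,dS\lesssim1$. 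Since $\varphi_{1,\Omega}\simeq\delta_{\partial\Omega}$ on $\Omega$ (Hopf lemma plus compactness of $\bar\Omega$), and since the regularity bound $\|v\|_{L^1(\Omega)}\lesssim\|g\,\delta_{\partial\Omega}\|_{L^1(\Omega)}$ holds for $-\Delta_{\S^N}v=g\ge0$, $v=0$ on $\partial\Omega$ (by duality against $\phi$ solving $-\Delta_{\S^N}\phi=\psi$ with $\|\psi\|_\infty\le1$, using $|\phi|\le w_\Omega\lesssim\delta_{\partial\Omega}$), I conclude that $\{u_\lambda\}$ and $\{f(u_\lambda)\,\delta_{\partial\Omega}\}$ are bounded in $L^1(\Omega)$ uniformly for $\lambda$ close to $\lambda^*_\Omega$.

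\textbf{Passage to the limit.} Along any sequence $\lambda_n\uparrow\lambda^*_\Omega$ monotonicity gives $u_{\lambda_n}\uparrow u^*$ and $f(u_{\lambda_n})\,\delta_{\partial\Omega}\uparrow f(u^*)\,\delta_{\partial\Omega}$, so monotone convergence and the previous step yield $u^*\in L^1(\Omega)$ and $f(u^*)\,\delta_{\partial\Omega}\in L^1(\Omega)$; in particular $u^*<\infty$ a.e. Each classical solution $u_{\lambda_n}$ is weak, i.e.\ $-\int_\Omega u_{\lambda_n}\Delta_{\S^N}\varphi\,dS=\lambda_n\int_\Omega f(u_{\lambda_n})\varphi\,dS$ for all $\varphi\in C^2(\bar\Omega)$ vanishing on $\partial\Omega$; since $u_{\lambda_n}\to u^*$ in $L^1(\Omega)$, $\Delta_{\S^N}\varphi\in L^\infty(\Omega)$, and $|f(u_{\lambda_n})\varphi|\lesssim f(u^*)\,\delta_{\partial\Omega}\in L^1(\Omega)$ (because $\varphi\in C^1(\bar\Omega)$ vanishes on $\partial\Omega$, so $|\varphi|\lesssim\delta_{\partial\Omega}$), dominated and monotone convergence let me pass $n\to\infty$ and get $-\int_\Omega u^*\Delta_{\S^N}\varphi\,dS=\lambda^*_\Omega\int_\Omega f(u^*)\varphi\,dS$, which together with $u^*\ge0$ is exactly \eqref{e-weak} at $\lambda=\lambda^*_\Omega$. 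I expect the only genuinely non-routine point to be the a priori bound of the middle step: it is the sole place where convexity and \eqref{KO} are really used, through the absorption that makes the estimate uniform as $\lambda\uparrow\lambda^*_\Omega$; the boundary-weight bookkeeping (comparability of $\delta_{\partial\Omega}$ with $\varphi_{1,\Omega}$ and the $L^1$ regularity estimate) is standard, and the construction and monotonicity of the $u_\lambda$ are immediate from Lemma~\ref{l:int1} and the comparison principle.
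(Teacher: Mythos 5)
Your proposal is correct and follows essentially the same route as the paper: the finiteness of $\lambda^*_\Omega$ via testing against $\varphi_{1,\Omega}$, the positivity of $\lambda^*_\Omega$ via the torsion-function super-solution, and monotone iteration through Lemma~\ref{l:int1}. The only difference is that you write out in full the uniform $\int_\Omega f(u_\lambda)\varphi_{1,\Omega}\,dS$ bound from \eqref{KO}, the $L^1$ duality estimate with the weight $\delta_{\partial\Omega}$, and the monotone-convergence passage to $u^*$, which the paper delegates to Brezis--Cazenave--Martel--Ramiandrisoa and Dupaigne; these details are carried out correctly.
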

\begin{proof}
	
	First observe that $u_{\lambda}$ is a non-decreasing  function of $\lambda.$ 
	This follows from the fact that
	$u_{\lambda^{\prime}}$ is a super-solution for  problem \eqref{eq:Gelfand} with
	$\lambda<\lambda^{\prime}.$ Hence, if \eqref{eq:Gelfand} with $\lambda=\lambda^{\prime}$
	admits a classical  solution, then \eqref{eq:Gelfand} admits a classical solution for $\lambda\in(0,\lambda^{\prime}]$.
	
	Next, it is easy to see that a multiple of the torsion function $w_\Omega$ is a super-solution for \eqref{eq:Gelfand} for small enough $\lambda$.
	Let 
	$$s_*:=\min\Big\{s>0:\frac{f(s)}{s}=a_*\Big\}>0.$$
	Set
	$$t^*:=\frac{s_*}{\|w_\Omega\|_\infty}.$$
	and take $\overline{u}:=t^*w_\Omega$ and $\lambda^\prime:=\frac{1}{a_*\|w_\Omega\|_\infty}$.
	Then
	\begin{equation}
	-\Delta_{\S^N}\overline{u}=t^*=t^*\frac{f(s_*)}{a_*s_*}=\frac{f(t^*\|w_\Omega\|_\infty)}{a_*\|w_\Omega\|_\infty}\ge \lambda^\prime f(\overline{u}),
	\end{equation}
	i.e. $\overline{u}$ is a positive supersolution for \eqref{eq:Gelfand} for all $\lambda\le\lambda^\prime$. In particular, this establishes the lower bound in \eqref{e-star}.

	Now let us show that $\lambda^*_\Omega<\infty$. Similarly to Brezis, Cazenave, Martel and Ramiandrisoa~\cite[Lemma 5]{Brezis96}, we test \eqref{eq:Gelfand} against the $\varphi_{1,\Omega}$. Then, 
	\begin{equation}
	\lambda_1(\Omega)\int_\Omega u_\lambda\varphi_{1,\Omega}\,dS=-\int_{\Omega} u_\lambda \Delta_{\S^N}\varphi_{1,\Omega}\,dS=\lambda\int_\Omega f(u_\lambda)\varphi_{1,\Omega}\, dS\ge\lambda a_*\int_\Omega u_\lambda\varphi_{1,\Omega}\,dS.
	\end{equation}
	We conclude that $\lambda_1(\Omega)\ge\lambda a_*>0$, which establishes the upper bound in \eqref{e-star}.
	
	Finally, proceeding as in the proof of \cite[Lemma 5]{Brezis96} or Dupaigne~\cite[Proposition 3.3.2]{Dupaigne} and using \eqref{KO}, we recover that $u^*$ is 	a weak solution of \eqref{eq:Gelfand}.
\end{proof}

\begin{lemma} \label{l:int3}
	Problem \eqref{eq:Gelfand} admits no weak solutions for $\lambda>\lambda^*_\Omega$.
\end{lemma}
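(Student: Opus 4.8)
The plan is to argue by contradiction, converting a hypothetical weak solution at a parameter $\lambda>\lambda^*_\Omega$ into a classical solution at a strictly smaller parameter, which is impossible by the very definition of $\lambda^*_\Omega$. Suppose therefore that for some $\lambda>\lambda^*_\Omega$ problem \eqref{eq:Gelfand} admits a weak solution $u\ge 0$ in the sense of \eqref{e-weak}, so that $f(u)\delta_{\partial\Omega}\in L^1(\Omega)$. Fix any $\lambda'\in(\lambda^*_\Omega,\lambda)$ and choose $t\in(\lambda'/\lambda,1)$. Since $0\le tu\le u$ and $f$ is monotone increasing, the function $\bar v:=tu\ge 0$ vanishes on $\partial\Omega$ and satisfies, in the weak sense,
\begin{equation*}
-\Delta_{\S^N}\bar v=t\lambda f(u)\ge t\lambda f(tu)\ge\lambda' f(\bar v);
\end{equation*}
moreover $f(\bar v)\delta_{\partial\Omega}\le f(u)\delta_{\partial\Omega}\in L^1(\Omega)$, so $\bar v$ is a weak supersolution of \eqref{eq:Gelfand} at the level $\lambda'$. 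Since $t\lambda>\lambda'$ and $f(\bar v)\ge f(0)>0$, this supersolution inequality in fact holds with a uniform gap: $-\Delta_{\S^N}\bar v\ge\lambda' f(\bar v)+(t\lambda-\lambda')f(0)$ in the weak sense.

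Next I would regularize $\bar v$ into a classical solution at the level $\lambda'$, following Brezis, Cazenave, Martel and Ramiandrisoa~\cite[Corollary~2]{Brezis96}. Run the monotone iteration $v_0=0$, and for $n\ge 1$ let $v_n$ solve $-\Delta_{\S^N}v_n=\lambda' f(v_{n-1})$ in $\Omega$ with $v_n=0$ on $\partial\Omega$; using the weak maximum principle for the Dirichlet Laplacian with $L^1(\delta_{\partial\Omega})$ data one checks inductively that $0\le v_{n-1}\le v_n\le\bar v$ for every $n$. Hence $v_n\uparrow v_\infty\le\bar v$, and since $f(v_n)\le f(\bar v)\in L^1(\delta_{\partial\Omega})$, dominated convergence shows that $v_\infty$ is a weak solution of \eqref{eq:Gelfand} at the level $\lambda'$ with $v_\infty\ge v_1=\lambda' f(0)w_\Omega>0$ in $\Omega$. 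The crucial point --- and the place where $\lambda'<\lambda$ is genuinely exploited --- is that the uniform gap recorded above, together with the convexity and the superlinear growth \eqref{KO} of $f$, makes the truncation argument of \cite{Brezis96} (applied to $\min\{v_\infty,k\}$, using the $L^p$--$W^{2,p}$ estimates for $-\Delta_{\S^N}$ on the compact manifold $\S^N$) yield $v_\infty\in L^\infty(\Omega)$; elliptic bootstrap then gives $v_\infty\in C^2(\bar\Omega)$, so $v_\infty$ is a classical positive solution of \eqref{eq:Gelfand} at the level $\lambda'$. Therefore $\lambda'\le\lambda^*_\Omega$, contradicting $\lambda'>\lambda^*_\Omega$; since $\lambda'\in(\lambda^*_\Omega,\lambda)$ was arbitrary, \eqref{eq:Gelfand} admits no weak solution for any $\lambda>\lambda^*_\Omega$.

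I expect the main obstacle to be this regularization step, i.e. showing that at a strictly subcritical level a weak supersolution with $L^1(\delta_{\partial\Omega})$ data is in fact bounded --- the gap $\lambda'<\lambda$ being indispensable, since the extremal solution $u^*$ at $\lambda^*_\Omega$ need not be bounded. It rests on the linear Dirichlet theory for $-\Delta_{\S^N}$ with $L^1(\delta_{\partial\Omega})$ data (solvability, uniqueness, the weak maximum principle, and the gain of integrability under truncation), which transfers from the Euclidean statements of \cite{Brezis96} by working in local charts but should be recorded carefully. By contrast, only the monotonicity of $f$ is needed for the scaling step in the first paragraph, whereas the convexity and the superlinear growth of $f$ enter precisely in order to drive this bootstrap.
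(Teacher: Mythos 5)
Your overall strategy---turn a hypothetical weak solution at $\lambda>\lambda^*_\Omega$ into a classical solution at some $\lambda'\in(\lambda^*_\Omega,\lambda)$ and contradict the definition of $\lambda^*_\Omega$---is the standard one, and indeed the paper itself only cites \cite[Theorem 3]{Brezis96} and Dupaigne \cite{Dupaigne} at this point. Your first paragraph is fine as far as it goes: $\bar v=tu$ is a weak supersolution at level $\lambda'$, and (granting the linear $L^1(\delta_{\partial\Omega})$ theory on the cap) the monotone iteration below $\bar v$ yields a minimal weak solution $v_\infty\le\bar v$ at level $\lambda'$. The genuine gap is the decisive step $v_\infty\in L^\infty(\Omega)$. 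The ``uniform gap'' $-\Delta_{\S^N}\bar v\ge\lambda' f(\bar v)+(t\lambda-\lambda')f(0)$ is merely an additive constant margin and carries no information about boundedness (any unbounded weak supersolution acquires such a margin after decreasing $\lambda'$), while the truncation $\min\{v_\infty,k\}$ device from \cite{Brezis96} that you invoke is used there to show that the extremal function is a \emph{weak} solution, not to extract $L^\infty$ bounds from such a margin. Note also that plain scaling gives no integrability gain for the model nonlinearity $f(s)=(1+s)^p$, since $f(tu)\simeq f(u)$; so after your reduction, the claim ``a weak supersolution at $\lambda'$ strictly below $\lambda$ forces a classical solution at $\lambda'$'' is exactly as hard as the lemma itself, and without a genuine use of the strict inequality it is false (unbounded extremal solutions show that a weak supersolution alone does not imply boundedness of the minimal solution).

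What the cited argument actually uses at this point is the convexity of $f$ through a concave reparametrization rather than scaling: with $\Psi(t)=\int_0^t ds/f(s)$ and $c=\lambda'/\lambda<1$, one considers $w:=\Psi^{-1}\big(c\,\Psi(u)\big)$, which is a concave function of $u$ vanishing at $0$ and satisfies $-\Delta_{\S^N}w\ge \lambda' f(w)$ in the weak sense (this is where Kato's inequality is needed to handle solutions only in the sense of \eqref{e-weak}). Unlike $tu$, this supersolution is genuinely smaller at infinity: for the model nonlinearities $e^s$ and $(1+s)^p$ one has $\int^\infty ds/f(s)<\infty$, so $w$ is \emph{bounded}, and the iteration below $w$ immediately produces a bounded, hence classical, solution at $\lambda'$, giving the contradiction; this is precisely where convexity and the superlinearity \eqref{KO} enter, whereas your sketch uses convexity only to record an additive constant, which cannot do that job. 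So either replace your scaling step by this composition argument (and justify the Kato-type manipulation and the $L^1(\delta_{\partial\Omega})$ linear theory on the spherical cap), or do as the paper does and simply quote \cite[Theorem 3]{Brezis96} or Dupaigne \cite{Dupaigne}; as written, the boundedness step is asserted with a mechanism that does not work.
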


\begin{proof}
Similar to  Brezis, Cazenave, Martel and Ramiandrisoa~\cite[Theorem 3]{Brezis96} or Dupaigne~\cite[Proposition 3.3.1]{Dupaigne} -- we omit the details.
\end{proof}

\section{Torsion function estimates}\label{sec:torsion}

In the rest of the paper we always work on the spherical cap $\Omega_\eps$. When there is no ambiguity, 
we suppress the dependence on $\eps$ and write $\lambda_1$, $\varphi_1$ instead of $\lambda_1(\Omega_\eps)$, $\varphi_{1,\Omega_\eps}$. We always assume that $\eps\in(0,1)$ and is close to $0$.

\subsection{Lower dimensions}
Consider the stereographic projection from $\S^N$ onto $\R^N$ that maps South Pole to infinity. 
Then the torsion function equation \eqref{eq:torsion} on the spherical cap $\Omega_\eps$ is transformed into linear ODE
\begin{equation}\label{eq:rad}
\left\{
\begin{aligned}
-\big(r^{N-1}p^{N-2}(r)w^\prime\big)^\prime&=r^{N-1}p^N(r),\qquad 0<r<R,\vspace{10pt}\\
w'(0)=0,\quad\quad w(R)&=0.
\end{aligned}
\right.
\end{equation}
where $r=\tan(\theta/2)$ and $\theta\in(0,\pi)$ is the geodesic distance to the North Pole, 
$$p(r):=\frac{2}{1+r^2},$$
and the spherical cap $\Omega_\eps$ on $\S^N$ is mapped onto the ball $B_{R}=\{x\in \R^N\,|\,|x|=R\}$
where 
\begin{equation}\label{eq:Reps}
R=\tan\Big(\frac{(1-\eps)\pi}{2}\Big)=\frac{2}{\pi\eps}+o(\eps)\quad\text{as $\eps\to 0$},
\end{equation}
see e.g.~Bandle and Benguria~\cite{BB} for a discussion. In lower dimensions solutions of \eqref{eq:rad} can be found explicitly.  
Taking into account \eqref{eq:Reps} and \eqref{eq:lambda1}, for $N=2$ we obtain
\begin{align}
w_{\Omega_\eps}(r)&=\log\frac{1+R^2}{1+r^2},\\ 
w_{\Omega_\eps}(0)&=2\log\big(\frac{2}{\pi\eps}\big)+o(\eps)=\frac1{\lambda_1(\Omega_\eps)}+O(1)\quad\text{as $\eps\to 0$},\label{eN2}
\end{align}
and for $N=3$ we obtain
\begin{align}
w_{\Omega_\eps}(r)&=\frac{1-r^2}{2r}\tan^{-1}(r)-\frac{1-R^2}{2R}\tan^{-1}(R),\\
w_{\Omega_\eps}(0)&=%\frac{\pi R}{4}+o(1)=
\frac1{2\eps}+o(1)=\frac1{\lambda_1(\Omega_\eps)}+O(1)\quad\text{as $\eps\to 0$}.\label{eN3}
\end{align}
This settles Proposition \ref{prop:torsion} for $N=2,3$.
In higher dimensions no explicit closed form expression for the torsion function on the spherical cap seems to be available. Instead, we are going to use the Fourier method to represent the torsion function.

\subsection{General Setting}

Let
$$0<\lambda_1<\lambda_2\le\lambda_3\le \dots\le \lambda_\ell\le \dots\qquad (\ell\in\N)$$
be the Dirichlet eigenvalues of $-\Delta_{\S^N}$ on the spherical cap $\Omega_\eps$, and 
$\varphi_\ell$ be the corresponding eigenfunctions normalized as 
$\|\varphi_\ell\|_2=1$. That is, $\varphi_\ell$ is a regular solution to the Dirichlet problem 
%%%%%%%%%%%%%%%%%%%%%% eq:eigenf%%%%%%%%
\begin{equation}\label{eq:eigenf}
-\Delta \varphi_\ell=\lambda_\ell\varphi_\ell,\qquad \varphi_\ell\in H^1_0(\Omega_\eps).
\end{equation}
It is well-known that the sequence $\{\varphi_\ell\}$ forms the complete ortho-normal system in $L^2(\Omega)$. 
Then we see that the torsion function $w_{\Omega_\eps}$ defined as the unique solution to \eqref{eq:torsion} is expressed as
%%%%%%%%%%%%%%%% sol %%%%%%%%%%%
\begin{equation}\label{sol}
w_{\Omega_\eps}=\sum_{\ell=1}^{\infty}\frac{(1,\varphi_\ell)}{\lambda_\ell}\varphi_\ell, 
\end{equation}
where 
$$
(\Phi,\Psi)=\int_{\Omega_\eps}\Phi\Psi\, dS.
$$
Indeed, due to the Fourier expansion of $1$ and the definition of the eigenfunctions, we have
$$
-\Delta w_{\Omega_\eps}=\sum_{\ell=1}^{\infty}\frac{(1,\varphi_\ell)}{\lambda_\ell}(-\Delta \varphi_\ell)= \sum_{\ell=1}^{\infty}\frac{(1,\varphi_\ell)}{\lambda_\ell}(\lambda_\ell \varphi_\ell)=\sum_{\ell=1}^{\infty}(1,\varphi_\ell)\varphi_\ell=1.
$$
We are going to estimate the right-hand side of \eqref{sol}. 
For clarity, we first outline the arguments in the case $N=3$, which is already covered by \eqref{eN3}.
 
%%%%%%%%%%%%%%%%%%%% 3D %%%%%%%%%%%%%
\subsection{Three dimensional case, ``radial'' eigenfunctions.}\label{s32}
For $N=3$ we have 
\begin{multline*}
\Omega_\eps=
\big\{(\sin\theta\sin\phi\cos\psi,\sin\theta\sin\phi\sin\psi, \sin\theta\cos\phi, \cos\theta)\,|\\  0\le \theta<(1-\eps)\pi,  0\le\phi\le \pi, 0\le \psi<2\pi\big\}.
\end{multline*}
First, we consider the solutions to \eqref{eq:torsion} depending only on $\theta$.  In this case, the Laplacian yields to 
$$
\Delta_{{\mathbb S}^3} \varphi=\frac{1}{\sin^2\theta}(\varphi' \sin^2\theta)'.
$$
Hence by the direct calculation, we see that the eigenfunctions depending 
only on $\theta$ are  of the form 
%%%%%%%%%%%%%%%%%%%%% eq:ef %%%%%%%%%%%
\begin{equation}\label{eq:ef}
\varphi_j(\theta)=K_j\frac{\displaystyle{\sin \frac{j}{1-\eps}\theta}}{\sin\theta}
\end{equation}
where $K_j$ are normalization constants and the corresponding eigenvalues $\lambda_j$ are 
\begin{equation}\label{eq:lj}
\lambda_j=\frac{j^2}{(1-\eps)^2}-1=\frac{j^2-1+2\eps-\eps^2}{(1-\eps)^2}=2\eps j^2+O(\eps^2).
\end{equation} 
Note that $\lim_{\eps\to 0}\lambda_j=0$ only when $j>1$, and that
\begin{equation}\label{eq:phi-infty}
\|\varphi_j\|_\infty=\varphi_j(0)=K_j\frac{j}{1-\eps}+o(\eps).
\end{equation}

The inner product in our case is given by
$$
(\Phi,\Psi)=\int_{\Omega}\Phi\Psi\, dS=4\pi\int_0^{(1-\eps)\pi}\Phi(\theta)\Psi(\theta)\sin^2\theta \, d\theta.
$$ 
Then we have
$$
\begin{aligned}
(1,\varphi_j)&=4\pi K_j\int_0^{(1-\eps)\pi}\sin \frac{j}{1-\eps}\theta \sin \theta \, d\theta\vspace{10pt}\\
&=2\pi K_j\int_0^{(1-\eps)\pi}\left(\cos \frac{j-1+\eps}{1-\eps}\theta-\cos \frac{j+1-\eps}{1-\eps}\theta\right)\, d\theta.
\end{aligned}
$$
Hence we obtain
$$
\begin{aligned}
(1,\varphi_j)&=\displaystyle{(-1)^j2\pi K_j(1-\eps)\left\{\frac{\sin (j-2)\eps\pi+O(\eps^3)}{j-1+\eps}-\frac{\sin j\eps\pi+O(\eps^3)}{j+1-\eps}\right\}}\vspace{10pt}\\
&=\displaystyle{-(-1)^{j}2\pi K_j(1-\eps)\frac{2\eps\pi+O(\eps^3)}{(j-1+\eps)(j+1-\eps)}.}
\end{aligned}
$$ 
As for $K_j$, we have
$$
(\varphi_j,\varphi_j)=4\pi K_j^2\int_0^{(1-\eps)\pi}\sin^2\frac{j}{1-\eps}\theta\, d\theta=2\pi K_j^2\int_0^{(1-\eps)\pi}\left(1-2\cos\frac{2j}{1-\eps}\theta\right)\, d\theta
$$
and thus, from
$$
1= (\varphi_j,\varphi_j)=2\pi K_j^2 (1-\eps)\pi,
$$
we obtain
\begin{equation}\label{eq:cj}
K_j=\frac{1}{\sqrt{2}\pi}+O(\eps).
\end{equation}
We hence conclude that 
\begin{equation}\label{eq:1j}
(1,\varphi_j)=\left\{
\begin{aligned}
\sqrt{2}+O(\eps),\quad j=1,\vspace{10pt}\\
\frac{(-1)^{j+1}2\sqrt{2}}{(j+1)(j-1)}\eps+O(\eps^2), \quad j\ge 2.
\end{aligned}
\right.
\end{equation}
From \eqref{sol} this implies that 
\begin{equation}\label{eq:1j2}
w_{\Omega_\eps}=\frac{\sqrt{2}\pi}{\lambda_1}\varphi_1+\sum_{j=2}^{\infty}\frac{(1,\varphi_j)}{\lambda_j}\varphi_j,
\end{equation}
and taking into account \eqref{eq:1j}, \eqref{eq:lj}, \eqref{eq:phi-infty} and \eqref{eq:cj} we conclude that
\begin{equation}\label{eq:1j3}
\left|\sum_{j=2}^{\infty}\frac{(1,\varphi_j)}{\lambda_j}\varphi_j\right|\le \sum_{j=2}^{\infty}O\big(j^{-4}\big)\|\varphi_j\|_\infty\le C.
\end{equation}
Since $\|\varphi_1\|_\infty=K_1(1-\eps)^{-1} $, using \eqref{eq:cj} we estimate
$$
\|w_{\Omega_\eps}\|_{\infty}\le \frac{\sqrt{2}\pi}{\lambda_1}\|\varphi_1\|_\infty+O(1)=\frac{1}{\lambda_1}+O(1).
$$
Combined with \eqref{e-tor-eigen} and \eqref{eq:lj} this leads to the two sided bound
\begin{equation}\label{e-olittle}
\|w_{\Omega_\eps}\|_{\infty}=\frac1{\lambda_1}+O(1)=\frac{1}{2\eps}+O(1).
\end{equation}

\begin{remark}
	We will see in Lemma~\ref{le:conv} that  
	$$\lim_{\eps\to 0}\|\varphi_1\|_\infty=\lim_{\eps\to 0}\varphi_1(0)$$
	holds in all dimensions $N\ge 2$.
	Thus, we obtain 
	$$\lim_{\eps\to 0}(1,\varphi_1)\|\varphi_1\|_\infty=1$$ and then
	\begin{equation}\label{e-olittle2}
	\|w_{\Omega_\eps}\|_{\infty}=\frac1{\lambda_1}+O(1),
	\end{equation}
	as in the cases $N=2,3$ above.
\end{remark}

%%%%%%%%%%%%% general dim %%%%%%%%%%%%%%
\subsection{General dimensions, ``radial'' eigenfunctions.}\label{sec:gene}

In the general case $N\ge 3$ 
%(the case $N=2$ is exceptional and is discussed in Subsection~\ref{subsec:two}), 
the spherical cap we consider is represented as follows:
%%%%%%%%%%%%%%%% eq:cap %%%%%%%%%
\begin{equation*}
\Omega_{\eps}=\big\{(x_1,x_2,\dots,x_{N+1})\, |\, \sum_{i=1}^{N+1}x_i^2=1,\ \cos(1-\eps)\pi<x_{N+1}\le 1\big\}.
\end{equation*}
We introduce the polar coordinates 
$$
\left\{
\begin{array}{l}
x_1=\displaystyle{(\prod_{j=1}^{N-1}\sin\theta_j)\sin\phi},\vspace{10pt}\\
x_2=\displaystyle{(\prod_{j=1}^{N-1}\sin\theta_j)\cos\phi}, \vspace{10pt}\\
x_{N-k+1}=\displaystyle{(\prod_{j=1}^{k}\sin\theta_j)\cos\theta_{k+1}},\quad k=1,2,\dots, N-2,\vspace{10pt}\\
x_{N+1}=\cos\theta_1.
\end{array}
\right.
$$
Then the Laplace-Beltrami operator is expressed as 
$$
\begin{aligned}
\Delta_{\S^N}= &\frac{1}{\sin^{N-1}\theta_1}\frac{\partial}{\partial \theta_1}\left[\sin^{N-1}\theta_1\frac{\partial}{\partial \theta_1}\right]\\
&+\frac{1}{\sin^2\theta_1\sin^{N-2}\theta_2}\frac{\partial}{\partial \theta_2}\left[\sin^{N-2}\theta_2\frac{\partial}{\partial \theta_2}\right]\\
&+\cdots\\
&+ \frac{1}{\sin^2\theta_1\sin^2\theta_2\cdots \sin^2\theta_{N-2}\sin\theta_{N-1}}\frac{\partial}{\partial \theta_{N-1}}\left[\sin\theta_{N-1}\frac{\partial}{\partial \theta_{N-1}}\right]\\
&+ \frac{1}{\sin^2\theta_1\cdots \sin^2\theta_{N-1}}\frac{\partial^2}{\partial \phi^2}.
\end{aligned}
$$
Hence
$$
\Delta_{\S^N}= \frac{1}{\sin^{N-1}\theta_1}\frac{\partial}{\partial \theta_1}\left[\sin^{N-1}\theta_1\frac{\partial}{\partial \theta_1}\right]+\frac{1}{\sin^2\theta_1}\Delta_{S_{N-1}}, \quad \Delta_{\S^1}=\frac{\partial^2}{\partial \phi^2}.
$$
Here we note that $\Delta_{\S^{N-1}}$ is a Laplace-Beltrami operator on $\mathbb{S}^{N-1}$ with the coordinates $\{\theta_2,\cdots,\theta_{N-1},\phi\}$.
\par
It is known (cf. Bandle, Kabeya and Ninomiya~\cite{BKN}) that
the Dirichlet eigenfunctions for $-\Delta_{\S^N}$ on $\Omega_\eps$ depending only on $\theta\in [0,(1-\eps)\pi]$, 
which are regular solutions to 
\begin{equation}\label{eq:rad-form}
\frac{d}{d\theta}(\sin^{N-1} \theta \frac{d}{d\theta}\varphi)+\lambda \sin^{N-1}\theta \varphi=0
\end{equation} 
are expressed as a constant multiple of 
%%%%%%%%%%%%%%%%%%% eq:legendre %%%%%%%%%%%%%%
\begin{equation}\label{eq:legendre}
\varphi_j(\theta_1)=K_{j, N}\frac{P_{\nu_j}^{\mu}(\cos\theta_1)}{(\sin \theta_1)^{(N-2)/2}},\qquad j=1,2,3,\dots
\end{equation}
with 
$$\mu=(-1)^{N}\tfrac{N-2}{2}
$$
and the normalizing constant $K_{j, N}$ so that $\|\varphi_j\|_2=1$. Here,
$P_\nu^{\mu}$ is the associated Legendre function defined in terms of the Gauss hypergeometric function $F(\alpha,\beta,\gamma;x)$ as follows:
%%%%%%%%%%%%%%%%%%%%%% eq:P-F %%%%%%%%%%%%%%%%
\begin{equation}\label{eq:P-F}
P_\nu^{\mu}(t)=\frac{1}{\Gamma(1-\mu)}\left(\frac{1+t}{1-t}\right)^{\mu/2}F\left(-\nu,\nu+1,1-\mu;\frac{1-t}{2}\right), \quad t\in (-1,1),
\end{equation}
when $\mu\not\in {\mathbb N}$ with $F(\alpha,\beta,\gamma;x)$ being defined as
$$
 F(\alpha,\beta,\gamma;x)=\frac{\Gamma(\gamma)}{\Gamma(\alpha)\Gamma(\beta)}\sum_{n=0}^{\infty}\frac{\Gamma(\alpha+n)\Gamma(\beta+n)}{\Gamma(\gamma+n)}\frac{x^n}{n!}.
 $$
 If $\mu$ is a natural number, $P_\nu^{\mu}(t)$ is defined as 
 %%%%%%%%%%%%%% eq:P-Fnatu} 
\begin{equation}\label{eq:P-Fnatu}
P_\nu^{\mu}(t)=\frac{\Gamma(\mu+\nu+1)}{\mu!2^{\mu}\Gamma(1+\nu-\mu)}(1-t^2)^{\mu/2}F\left(\mu-\nu,\mu+\nu+1,\mu+1;\frac{1-t}{2}\right), \quad t\in (-1,1).
\end{equation}
Also, it is known that $P_\nu^{\mu}(t)$ is a solution to the associated Legendre differential equation
 \begin{equation}\label{eq:ass-legendre}
 (1-t^2)\frac{d^2u}{dt^2}-2t\frac{du}{dt}+\left\{\nu(\nu+1)-\frac{\mu^2}{1-t^2}\right\}u=0.
 \end{equation}
 Also,  $P_\nu^{\mu}(t)/(1-t^2)^{-\mu/2}$ is a solution to the ultra-sphere differential equation
  \begin{equation}\label{eq:ultra-sphere}
 (1-t^2)\frac{d^2u}{dt^2}-2(\mu+1)t\frac{du}{dt}+(\nu-\mu)(\nu+\mu+1)u=0.
 \end{equation}
\par
If we consider all the eigenfunctions, which depend on more than two variables (we discuss this in details later in Sections \ref{s35}, \ref{s36}),  those of $-\Delta$ in $\Omega_{\eps}$ are represented in terms of the polar coordinate as
%%%%%%%%%%%%%%%%%%%%%%%% eq:eigenf_n %%%%%%%%%%%%%%%%%%%%%%
\begin{equation}\label{eq:n-eigenf_n}
\Tilde \Phi_\ell=K_\ell\frac{\hat{P}_\nu^{k_2+(N-2)/2}(\cos\theta_1)}{(\sin \theta_1)^{(N-2)/2}}\prod_{j=1}^{N-2}\frac{\hat P_{k_{{}_{N-j}}+(j-1)/2}^{k_{{}_{N-j+1}}+(j-1)/2}(\cos\theta_{N-j})}{(\sin \theta_{N-j})^{(j-1)/2}}\times\left\{
\begin{array}{l}
\cos k_{N}\phi,\vspace{10pt}\\
\sin k_{N}\phi,
\end{array}
\right.
\end{equation}
where  $\ell$ is a nonnegative integer and where $\{k_s\}$ is a sequence of integers such that 
%%%%%%%%%%%%%%%% k-order %%%%%%%%%%%%%%%%%%%
\begin{equation}\label{k-order}
0\leq k_N\leq k_{N-1}\leq\dots\leq k_{j+1}\leq k_j\leq\dots\leq k_2\leq \ell.
\end{equation}
The real number $\nu$ must be chosen so that $\hat{P}_\nu^{k_2+(N-2)/2}$ solves \eqref{eq:eigenf}. Here we define 
$$
{\hat P}_{\nu}^{\mu}(\cos \theta)=\left\{
\begin{aligned}
P_{\nu}^{\mu}(\cos\theta_1)&\ \mbox{if}\ \mu\ \mbox{is\ an\ integer,}\vspace{10pt}\\
P_{\nu}^{-\mu}(\cos\theta_1)&\ \mbox{if}\ \mu\ \mbox{is\ a\ half\ integer,}\
\end{aligned}
\right.
$$
where $\theta_1$ varies in $[0, (1-\eps)\pi]$, $\theta_i\in [0,\pi]$ ($i=2,3,\dots,N-1)$ and $\phi\in[0,2\pi]$. 

Note that $\ell=0$ corresponds to the first eigenvalue and that the corresponding eigenfunction depends only on $\theta_1$ according to \eqref{k-order}. First, we study the case when all the eigenfunctions depend only on $\theta_1$. 
Then, for the Dirichlet problem,  by Bandle, Kabeya and Ninomiya~\cite{BKN} or Macdonald~\cite{McD}, there holds 
\begin{equation}\label{eq:nu_ep-n}
\nu_j=(j-1)+\frac{N-2}{2}+C_j\eps^{N-2}+o(\eps^{N-2}),
\end{equation}
where $C_j$ is a constant explicitly determined in \cite{BKN}, which has a complicated expression. 

The relation between $\nu_j$ and the eigenvalue $\lambda_j$ is expanded as 
\begin{equation}\label{eq:lambda_n}
\lambda_j=\nu_j(\nu_j+1)-\frac{N(N-2)}{4}=(j-1)(j+N-2)+(2j+N-3)C_j\eps^{N-2}+o(\eps^{N-2}).
\end{equation}
Note that the first eigenvalue 
\begin{equation}\label{eq:1steigen}
\lambda_1=(N-1)C_1\eps^{N-2}+o(\eps^{N-2})
\end{equation}
is close to zero.

In order to estimate the maximum of the torsion function $\|w_{\Omega_\eps}\|_{\infty}$, we need the following formula on the definite integral formula, which is valid for any dimension, and which is written on p.~129 in  Moriguchi, Udagawa and Hitotsumatsu~\cite{MUH} as
%%%%%%%%%%%%%%%% eq:definite %%%%%%%%%%
\begin{equation}\label{eq:definite}
\begin{aligned}
&\int_0^{\pi}P_\nu^{\mu}(\cos\theta)\sin^{\alpha-1}\theta\, d\theta\vspace{10pt}\\
&\hspace{20pt}=\frac{2^{\mu}\pi \Gamma((\alpha+\mu)/2)\Gamma((\alpha-\mu)/2)}{\Gamma((\alpha-\nu)/2)\Gamma((\nu+\alpha+1)/2)\Gamma((-\mu-\nu+1)/2)\Gamma((\nu-\mu+2)/2)}.
\end{aligned}
\end{equation} 
The expression is valid for any $\mu,\ \alpha\in{\mathbb R}$ such that $|\mu|<\alpha$. In this case, $|\mu|=(N-2)/2<N/2+1$. Thus, \eqref{eq:definite} is applicable.

%%%%%%%%%%%%%%%%%%%%% 3D case %%%%%%%%%%%%%
\subsection{All the eigenfunctions. Three dimensional case.}\label{s35}
If we consider the eigenfunctions which depends on $\theta_i$ and/or $\phi$, first we consider the case $N=3$. Then according to \eqref{eq:n-eigenf_n}, we have
%%%%%%%%%%%% eq:3eigen %%%%%%%%%%
\begin{equation}\label{eq:3eigen}
\Phi=K_j\frac{\hat{P}_\nu^{m+1/2}(\cos\theta_1)}{(\sin \theta_1)^{1/2}}P_{m}^{k}(\cos\theta_2)(c_1\cos k\phi+c_2\sin k\phi).
\end{equation}
In the three dimensional case, the sphere element $dS$ is expressed as 
$$
dS=\sin^{2}\theta_1\sin\theta_2\, d\theta_1d\theta_2d\phi.
$$
If $k$ is a natural number, it is very easy to see 
that
$$
\begin{aligned}
&\int_{\Omega_\eps}\hat\Phi\, dS\vspace{10pt}\\
&=\int_0^{(1-\eps)\pi}\!\!\!\int_0^{\pi}\!\!\!\int_0^{2\pi}\frac{\hat{P}_\nu^{m+1/2}(\cos\theta_1)}{(\sin \theta_1)^{1/2}}P_{m}^{k}(\cos\theta_2)(c_1\cos k\phi+c_2\sin k\phi)\, \sin^{2}\theta_1\sin\theta_2 d\phi d\theta_2d\theta_1\vspace{10pt}\\
&=0
\end{aligned}
$$
since $\phi$ varies from $0$ to $2\pi$. If $k=0$ in \eqref{eq:3eigen},  then we consider $P_m^{0}(\cos\theta_2)$. In this case, we need to calculate
$$
\int_0^{\pi}P_m^{0}(\cos\theta_2)\sin \theta_2\, d\theta_2
$$
with some natural number $m$. 
In \eqref{eq:definite}, we take $\mu=0,\ \nu=m,\ \alpha=2$. Hence, we have
$$
\frac{\alpha-\nu}{2}=1-\frac{m}{2},\quad \frac{-\mu-\nu+1}{2}=-\frac{m-1}{2}.
$$
If $n$ is even, the former is nonpositive integer and if $n$ is odd, the latter is so. Hence, due to \eqref{eq:definite}, we obtain
$$
\int_0^{\pi}P_n^{0}(\cos\theta_2)\sin \theta_2\, d\theta_2=0.
$$
Hence when $N=3$, it is sufficient to consider the ``radial" case only. 

\subsection{All the eigenfunctions. Four dimensional case and the general case.}\label{s36}
Similarly to the three dimensional case, the eigenfunction is of the form
%%%%%%%%%%%%%%%%%%%%eq:4eigen %%%%%%%%%%%%
\begin{equation}\label{eq:4eigen}
\hat \Phi=K_j \frac{\hat{P}_{\nu}^{n+1}(\cos\theta_1)}{\sin \theta_1}\frac{\hat{P}_{n+1/2}^{m+1/2}(\cos\theta_2)}{(\sin \theta_2)^{1/2}}P_{m}^{k}(\cos\theta_3)(c_1\cos k\phi+c_2\sin k\phi).
\end{equation}
In the four dimensional case, the sphere element $dS$ is expressed as 
$$
dS=\sin^{3}\theta_1\sin^{2}\theta_2\sin\theta_3\, d\theta_1d\theta_2d\phi.
$$
As in the three dimensional case, if one of $k$, $m$ is positive, then we have 
$$
\begin{aligned}
&\int_{\Omega_\eps}\hat\Phi\, dS&\vspace{10pt}\\
&=\int_0^{(1-\eps)\pi}\!\!\!\int_0^{\pi}\!\!\!\int_0^{\pi}\!\!\!\int_0^{2\pi}\frac{\hat{P}_{\nu}^{n+1}(\cos\theta_1)}{\sin \theta_1}\frac{\hat{P}_{n+1/2}^{m+1/2}(\cos\theta_2)}{(\sin \theta_2)^{1/2}}P_{m}^{k}(\cos\theta_3)\left\{
\begin{aligned}
&c_1\cos k\phi\vspace{10pt}\\
&c_2\sin k\phi
\end{aligned}
\right\}\times\vspace{10pt}\\
&\hspace{200pt} \sin^{3}\theta_1\sin^{2}\theta_2\sin\theta_3 d\phi d\theta_3 d\theta_2d\theta_1\vspace{10pt}\\
&=0.
\end{aligned}
$$
Thus, we consider $k=m=0$ and calculate 
$$
\int_0^{\pi}\hat{P}_{n+1/2}^{1/2}(\cos\theta_2)\sin^{3/2}\theta_2\, d\theta_2.
$$
In this case, we note that $\hat{P}_{n+1/2}^{1/2}(\cos\theta_2)=P_{n+1/2}^{-1/2}(\cos\theta_2)$. Then we apply \eqref{eq:definite} with 
$$\
\alpha=\frac{5}{2}, \quad \mu=-\frac{1}{2},\quad \nu=n+\frac{1}{2}.
$$ 
Hence we have 
$$
\frac{\alpha-\nu}{2}=-\frac{n-2}{2}, \quad -\frac{\mu+\nu-1}{2}=-\frac{n-1}{2}.
$$
If $n$ is an even integer, then $(\alpha-\nu)/2$ is a nonpositive integer and if $n$ is odd, so is  $-(\mu+\nu-1)/2$. 
Thus, in the four dimensional case also, we see that
$$
\int_0^{\pi}\hat{P}_{n+1/2}^{1/2}(\cos\theta_2)\sin^{3/2}\theta_2\, d\theta_2=0
$$
if $n\ge 1$ and we have only to consider the eigenfunctions depending only on $\theta_1$.

Inductively, we proved that we have only to consider the ``radial" eigenfunctions to estimate the torsion function. Thus in \eqref{eq:n-eigenf_n}, we have only to consider the case 
$$
k_2=k_3=\dots=k_N=0,
$$
 that is, we consider the eigenfunctions of the form 
$$
\frac{P_\nu^{(N-2)/2}(\cos\theta_1)}{(\sin\theta_1)^{(N-2)/2}}
$$ 
as in \eqref{eq:legendre}. 

\subsection{Proof of Proposition~\ref{prop:torsion} completed.}
To complete the proof of Proposition~\ref{prop:torsion} in general dimensions $N\ge 4$, we use the representation of the torsion function $w_{\Omega_\eps}$ as in \eqref{sol}.  
To calculate $(1, \varphi_j)$, we note that 
$$
(1,\varphi_j)=\omega_{N-1}\int_0^{(1-\eps)\pi}\varphi_j(\theta)\sin^{N-1}\theta\, d\theta
$$
where $\omega_{N-1}$ stands for the area of the unit sphere ${\mathbb S}^{N-1}.$ Hence, 
$$
(1,\varphi_j)=\omega_{N-1}K_j\int_0^{(1-\eps)\pi}P_{\nu}^{\mu}(\cos\theta)(\sin \theta)^{N/2}\, d\theta.
$$
We recall \eqref{eq:definite} here and the validity of the formula. The conditions are $\mu,\ \alpha\in{\mathbb R}$ such that $|\mu|<\alpha$. In this case, $|\mu|=(N-2)/2<N/2+1$. Thus, \eqref{eq:definite} is applicable.
When $j\ge 2$, we should be careful. Since $\Gamma(-n)=\infty$ $(n=0,1,2,\dots)$ (see (i) of Lemma~\ref{le:Gamma} below), or more precisely, $z=-n$ is the pole of order one for $\Gamma(z)$, if $(-\mu-\nu+1)/2$ or $(\alpha-\nu)/2$ are non-positive integers, 
we regard 
$$
\int_0^{\pi}P_\nu^{\mu}(\cos\theta)\sin^{\alpha-1}\theta\, d\theta=0.
$$
In our case,  $(-\mu-\nu+1)/2$ or $(\alpha-\nu)/2$ are not exactly non-positive integers, but are close to those.  This means that the value of the desired definite integral is small enough.

Indeed, when $N=2m$ $(m=2,3,\dots$), we have 
$$
\mu=m-1, \quad \nu \simeq j+m-2+C_j\eps^{N-2}, \quad \alpha=m+1,
$$
$$
\nu+\alpha+1\simeq 2m+j+C_j\eps^{N-2},\quad \nu-\mu+2\simeq j+1+C_j\eps^{N-2}.
$$
Then 
$$
\alpha-\nu \simeq 3-j-C_j\eps^{N-2},\quad -\mu-\nu+1\simeq -j-2m+4-C_j\eps^{N-2}.
$$
For $j=1$, these do not converge to a nonpositive integer.  
Hence we have 
$$
\begin{aligned}
\int_0^{\pi}P_\nu^{\mu}(\cos\theta)\sin^{m}\theta\, d\theta&=\frac{2^{N/2-1}\pi\Gamma(\frac{N}{2})\Gamma(1)}{(\Gamma(1))^2\Gamma ((\frac{N+1}{2})\Gamma(\frac{3-N}{2})}+O(\eps^{N-2})\vspace{10pt}\\
&=\frac{(-1)^{N/2+1}2^{N/2}(\frac{N}{2}-1)!}{N-1}+O(\eps^{N-2})\ne0
\end{aligned}
$$
and
$$
\begin{aligned}
&\int_0^{\pi}P_\nu^{\mu}(\cos\theta)\sin^{m}\theta\, d\theta \simeq\vspace{10pt}\\
&\hspace{40pt} \frac{2^{m-1}\Gamma(m)\Gamma(1)\pi}{\Gamma(\frac{3-j}{2}-\frac{C_j}{2}\eps^{N-2})\Gamma(-m-\frac{j-4}{2}-\frac{C_j}{2}\eps^{N-2})\Gamma(m+\frac{j}{2})\Gamma(\frac{j+1}{2})}
\end{aligned}
$$
for $j\ge 2$.

When $j=2L$, we see that 
\begin{equation}\label{eq:int_even_2L}
\int_0^{\pi}P_\nu^{\mu}(\cos\theta)\sin^{m}\theta\, d\theta \simeq \frac{(-1)^{m+L-1}2^{m-2}(m-1)!\pi C_j}{(m+L-1)\Gamma(\frac{3}{2}-L)\Gamma(L+\frac{1}{2})}
\eps^{N-2}
\end{equation}
and when $j=2L-1$, we have
\begin{equation}\label{eq:int_even_2L-1}
\int_0^{\pi}P_\nu^{\mu}(\cos\theta)\sin^{m}\theta\, d\theta \simeq \frac{(-1)^{L-1}2^{m-2}(m-1)!\pi C_j}{L\Gamma(m+L-\frac{1}{2})\Gamma(-m-L+\frac{5}{2})}
\eps^{N-2}.
\end{equation}
When $N=2m-1$, then we have
$$
\mu=-\frac{N-2}{2}=\frac{3}{2}-m, \quad \nu\simeq j+m-\frac{5}{2}+C_j\eps^{N-2}, \quad \alpha=m+\frac{1}{2}.
$$
Then 
$$
\alpha-\nu\simeq3-j-C_j\eps^{N-2},\quad -\mu-\nu+1\simeq 2-j-C_j\eps^{N-2},
$$
$$
\alpha+\nu+1 \simeq 2m+j-1+C_j\eps^{N-2},\quad \nu-\mu+2 \simeq j+2m-2+C_j\eps^{N-2}.
$$
For $j=1$, $-\mu-\nu+1$ does not converge to a nonpositive integer, either. Indeed, we have
$$
\begin{aligned}
\int_0^{\pi}P_\nu^{\mu}(\cos\theta)\sin^{m-1/2}\theta\, d\theta&=\frac{2^{-N/2+1}\pi\Gamma(N)\Gamma(1)}{\Gamma(1)\Gamma (\frac{N+1}{2})\Gamma(\frac{1}{2})\Gamma(N)}+O(\eps^{N-2})\vspace{10pt}\\
&=\frac{2^{-N/2+1}\pi}{\Gamma((N+1)/2)\Gamma(1/2)}+O(\eps^{N-2})\ne0
\end{aligned}
$$
and  for $j\ge 2$, we get 
$$
\int_0^{\pi}P_\nu^{\mu}(\cos\theta)\sin^{m-1/2}\theta\, d\theta\simeq \frac{2^{-(m-3/2)}\Gamma(m-\frac{1}{2})\Gamma(1)\pi }{\Gamma(\frac{3-j}{2}-\frac{C_j}{2}\eps^{N-2})\Gamma(\frac{2-j}{2}-\frac{C_j}{2}\eps^{N-2})\Gamma (m+\frac{j-1}{2})\Gamma(m+\frac{j-2}{2})}
$$
As in the even dimensional case, if $j=2L$, we have
\begin{equation}\label{eq:int_odd_2L}
\int_0^{\pi}P_\nu^{\mu}(\cos\theta)\sin^{m-1/2}\theta\, d\theta\simeq \frac{(-1)^{L}2^{-(m-1/2)}\Gamma(m-\frac{1}{2})(L-1)!\pi }{\Gamma(\frac{3}{2}-L)\Gamma(m+L-\frac{1}{2})(n+L-2)!}C_j\eps^{N-2}
\end{equation}
and 
\begin{equation}\label{eq:int_odd_2L-1}
\int_0^{\pi}P_\nu^{\mu}(\cos\theta)\sin^{m-1/2}\theta\, d\theta\simeq \frac{(-1)^{L-1}2^{-(m-1/2)}\Gamma(m-\frac{1}{2})(L-2)!\pi }{\Gamma(\frac{3}{2}-L)\Gamma(m+L-\frac{3}{2})(n+L-2)!}C_j\eps^{N-2}
\end{equation}
when $j=2L-1.$ 
\par
Thus, for any dimension, if $j=1$, then we have
$$
\int_0^{\pi}P_\nu^{\mu}(\cos\theta)\sin^{\alpha-1}\theta\, d\theta\ne 0.
$$
Also we conclude that 
$$
\int_0^{(1-\eps)\pi}P_\nu^{\mu}(\cos\theta)\sin^{\alpha-1}\theta\, d\theta\ne 0,
$$
when $j=1$ for any small $\eps>0$. 
\par
Moreover, we have the following.
\begin{lemma}\label{le:conv} 
The first eigenfunction $\varphi_1(\theta)$, which is nonnegative and is monotone decreasing,  converges to a constant function as $\eps\to 0$ compact uniformly on $[0, \pi)$. Moreover, there hold
$$
\|\varphi_1\|_{\infty}\to \frac{1}{\sqrt{\omega_N}},\quad 
(1,\varphi_1)\to \sqrt{\omega_N},\quad 
(1,\varphi_1)\|\varphi_1\|_\infty=1+o(1)
$$
as $\eps\to 0$,
where $\omega_N=\frac{2\pi^{(N+1)/2}}{\Gamma((N+1)/2)}$ is the surface area of $\S^N$. 
\end{lemma}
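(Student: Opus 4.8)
The plan is to argue directly from the radial ODE for $\varphi_1$. By Sections~\ref{s35}--\ref{s36} the first eigenfunction depends only on $\theta=\theta_1$, and, being the principal Dirichlet eigenfunction, it may be taken positive; thus $\varphi_1$ is the regular solution on $[0,(1-\eps)\pi]$ of $\frac{d}{d\theta}(\sin^{N-1}\theta\,\varphi_1')+\lambda_1\sin^{N-1}\theta\,\varphi_1=0$ with $\varphi_1((1-\eps)\pi)=0$ and $\|\varphi_1\|_2=1$, and by \eqref{eq:lambda1} we have $\lambda_1\to 0$ as $\eps\to0$.

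First I would integrate the equation from $0$: using regularity at the North Pole, $\sin^{N-1}\theta\,\varphi_1'(\theta)=-\lambda_1\int_0^\theta\sin^{N-1}s\,\varphi_1(s)\,ds<0$ for $\theta>0$, so $\varphi_1$ is decreasing and $\|\varphi_1\|_\infty=\varphi_1(0)>0$. Normalising by the maximum, $\psi_\eps:=\varphi_1/\varphi_1(0)$ satisfies $0\le\psi_\eps\le1$, solves the same equation, and the same identity gives $|\psi_\eps'(\theta)|\le\lambda_1\sin^{1-N}\theta\int_0^\theta\sin^{N-1}s\,ds\le C(\theta_0)\lambda_1$ on $(0,\theta_0]$ for every fixed $\theta_0<\pi$ (this weight is $\sim\theta/N$ near $0$ and bounded on $[0,\theta_0]$ whenever $\theta_0<\pi$). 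Hence $0\le 1-\psi_\eps\le\pi\,C(\theta_0)\lambda_1$ on $[0,\theta_0]$, so $\psi_\eps\to1$ uniformly on compact subsets of $[0,\pi)$.

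Next I would recover the constant $\varphi_1(0)$ from the normalisation. In polar coordinates $1=\varphi_1(0)^2\,\omega_{N-1}\int_0^{(1-\eps)\pi}\psi_\eps^2\sin^{N-1}\theta\,d\theta$, and the integral tends to $\int_0^\pi\sin^{N-1}\theta\,d\theta$: it is $\le\int_0^\pi\sin^{N-1}$ since $\psi_\eps\le1$, and $\ge\int_0^{\theta_0}\psi_\eps^2\sin^{N-1}\to\int_0^{\theta_0}\sin^{N-1}$ for each $\theta_0<\pi$. Since $\omega_{N-1}\int_0^\pi\sin^{N-1}\theta\,d\theta=\omega_N$, we get $\varphi_1(0)\to\frac1{\sqrt{\omega_N}}$, i.e. $\|\varphi_1\|_\infty\to\frac1{\sqrt{\omega_N}}$. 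Running the identical estimate on $(1,\varphi_1)=\varphi_1(0)\,\omega_{N-1}\int_0^{(1-\eps)\pi}\psi_\eps\sin^{N-1}\theta\,d\theta$ gives $(1,\varphi_1)\to\sqrt{\omega_N}$, and multiplying, $(1,\varphi_1)\|\varphi_1\|_\infty\to1$. Finally $\varphi_1=\varphi_1(0)\psi_\eps$ together with the two facts just proved yields $\varphi_1\to\frac1{\sqrt{\omega_N}}$ uniformly on compact subsets of $[0,\pi)$.

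The only point requiring care is that all convergences of $\psi_\eps$ hold only on compact subsets of $[0,\pi)$, i.e. away from the moving boundary $\theta=(1-\eps)\pi\to\pi$; this is harmless since there $\psi_\eps\le1$ while $\sin^{N-1}\theta$ is integrable near $\pi$, so a neighbourhood of the South Pole contributes a uniformly small amount to every integral above (a dominated-convergence type estimate). A more computational alternative would be to use the representation \eqref{eq:legendre} together with $\nu_1\to(N-2)/2$ from \eqref{eq:nu_ep-n} and continuity of $P_\nu^\mu(\cos\theta)/(\sin\theta)^{(N-2)/2}$ in the order $\nu$, observing via the ultra-sphere equation \eqref{eq:ultra-sphere} that at $\nu=(N-2)/2$ this quotient is constant; but the ODE argument is cleaner and more self-contained.
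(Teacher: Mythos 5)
Your proof is correct and follows the same overall strategy as the paper's: both work with the radial equation \eqref{eq:rad-form}, use $\lambda_1\to 0$, get monotonicity from positivity of the principal eigenfunction, and identify the limiting constant through the normalization $\|\varphi_1\|_2=1$. The difference lies in how convergence to a constant is obtained. The paper appeals softly to continuous dependence of solutions of \eqref{eq:rad-form} on the parameter, so that the regular solution converges to a regular (hence constant) solution of \eqref{eq:limit}; you instead normalize by the maximum, $\psi_\eps=\varphi_1/\varphi_1(0)$, integrate the equation once from the regular endpoint, and obtain the explicit bound $|\psi_\eps'|\le C(\theta_0)\lambda_1$ on $[0,\theta_0]$. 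This buys a quantitative rate $0\le 1-\psi_\eps\lesssim\lambda_1$ on compact subsets of $[0,\pi)$, and it sidesteps the slightly delicate point in the continuous-dependence argument that the ``initial value'' $\varphi_1(0)$ itself depends on $\eps$: in your setup $\psi_\eps(0)=1$ is fixed, and $\varphi_1(0)$ is recovered only afterwards from the $L^2$ normalization. Your handling of the integrals near the moving boundary (using $\psi_\eps\le 1$ and integrability of $\sin^{N-1}\theta$ near $\pi$, together with $\omega_{N-1}\int_0^\pi\sin^{N-1}\theta\,d\theta=\omega_N$) is exactly what is needed to pass to the limit in $\|\varphi_1\|_2$ and $(1,\varphi_1)$, a point the paper leaves implicit.
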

\begin{proof}
As we consider the first eigenfunction, first we note that $\nu\to  (N-2)/2$ and that $\lambda_1\to 0$ as $\eps\to 0$ according to \eqref{eq:nu_ep-n}, \eqref{eq:lambda_n}, \eqref{eq:1steigen}.
Moreover, the first eigenfunction is monotone decreasing in $\theta$ in view of \eqref{eq:rad-form} since $\varphi_1$ is positive due to the characterization of the first eigenfunction.  Then putting $\lambda=\lambda_1$ in \eqref{eq:rad-form}, due to the continuous dependence of a solution 
of an ordinary differential equation on the data, a regular solution to \eqref{eq:rad-form} converges to a regular solution to 
\begin{equation}\label{eq:limit}
\frac{d}{d\theta}(\sin^{N-1}\theta \frac{d}{d\theta}\varphi)=0
\end{equation}
compact uniformly in $[0, \pi)$. \eqref{eq:limit} implies that 
$$
\frac{d}{d\theta}\varphi=\frac{K}{\sin^{N-1}\theta}
$$
with some constant $K$. In order to have a regular solution at $\theta=0$, $K$ must be zero and  we see that a regular solution to \eqref{eq:limit} must be a constant. 

Thus,  as $\eps\to 0$,  
$\varphi_1\to c_N$ for a constant $c_N>0$, compactly uniformly on $[0,\pi)$. 
Hence, we have 
$$(1,\varphi_1)\to c_N\omega_N$$ 
and 
$$\|\varphi_1\|_2^2\to c_N^2\omega_N.$$
Using the normalization assumption
$\|\varphi\|_2=1$ we conclude that $c_N=1/\sqrt{\omega_N}$. Hence
$$(1,\varphi_1)\|\varphi_1\|_\infty=(\sqrt{\omega_N}+o(1))(1/\sqrt{\omega_N}+o(1))=1+o(1),$$
so the assertion follows.
\end{proof}

Only the case $j=1$ is exceptional. Intuitively, we see that the first eigenfunction converges to a positive constant and this makes the limit of $(1,\varphi_1)$ be away from zero as $\eps\to +0$. If $j\ge 2$ then $(1,\varphi_j)\to 0$ as $\eps\to +0$. More precisely, we have the following lemma which will be proved in the next section.

%%%%%%%%%%%%%%%%%%% prop:1%%%%%%%%%%%%%%
\begin{lemma}\label{prop:1} 
	Let $N\ge 3$. For each $\ell\ge 2$, as $\eps\to 0$ there holds
	%%%%%%%%%%%%%%%%%%eq:decay %%%%%%%%%%%%
	\begin{equation}\label{eq:decay}
	(1,\varphi_\ell)= M_{\ell, N} \eps^{N-2}+o(\eps^{N-2})
	\end{equation}
	where 
	$M_{\ell, N}$ is a constant which is independent of $\eps$.
\end{lemma}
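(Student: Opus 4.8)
The plan is to compute $(1,\varphi_\ell)$ using the reduction already established in Sections~\ref{s35}--\ref{s36}: only the ``radial'' eigenfunctions $\varphi_\ell(\theta_1)=K_{\ell,N}P_{\nu_\ell}^{\mu}(\cos\theta_1)/(\sin\theta_1)^{(N-2)/2}$ with $\mu=(-1)^N\tfrac{N-2}{2}$ contribute, so that
\begin{equation*}
(1,\varphi_\ell)=\omega_{N-1}K_{\ell,N}\int_0^{(1-\eps)\pi}P_{\nu_\ell}^{\mu}(\cos\theta)(\sin\theta)^{N/2}\,d\theta .
\end{equation*}
First I would replace the truncated integral $\int_0^{(1-\eps)\pi}$ by the full integral $\int_0^{\pi}$ at the cost of an $O(\eps^{N-1})$ error: indeed $\nu_\ell\to \ell-1+\tfrac{N-2}{2}$, so $P_{\nu_\ell}^{\mu}(\cos\theta)(\sin\theta)^{N/2}$ is bounded on $[(1-\eps)\pi,\pi]$ uniformly in small $\eps$, and the interval has length $\eps\pi$; actually one gains an extra power because $(\sin\theta)^{N/2}$ vanishes near $\theta=\pi$, but $O(\eps^{N-1})=o(\eps^{N-2})$ already suffices. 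Then I apply the definite integral formula \eqref{eq:definite} with $\alpha=N/2+1$ and the given $\mu$, which expresses $\int_0^\pi P_{\nu_\ell}^{\mu}(\cos\theta)(\sin\theta)^{N/2}d\theta$ as an explicit product/quotient of Gamma functions evaluated at arguments that are affine in $\nu_\ell$.

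The key point is the pole structure. For $\ell\ge 2$ one of the arguments in the \emph{denominator} of \eqref{eq:definite} — namely $(\alpha-\nu_\ell)/2$ when $\ell$ is odd, or $(-\mu-\nu_\ell+1)/2$ when $\ell$ is even (this dichotomy is exactly the one already spelled out in Sections~\ref{sec:gene}--\ref{s36}) — converges to a non-positive integer as $\eps\to 0$. Writing $\nu_\ell=(\ell-1)+\tfrac{N-2}{2}+C_\ell\eps^{N-2}+o(\eps^{N-2})$ from \eqref{eq:nu_ep-n}, the relevant Gamma argument has the form $-n+\tfrac{C_\ell}{2}\eps^{N-2}+o(\eps^{N-2})$ for a fixed non-negative integer $n$. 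Since $\Gamma(z)$ has a simple pole at $z=-n$ with residue $(-1)^n/n!$, we have $1/\Gamma(-n+\delta)=(-1)^n n!\,\delta+O(\delta^2)$ as $\delta\to 0$; applying this with $\delta=\tfrac{C_\ell}{2}\eps^{N-2}+o(\eps^{N-2})$ produces exactly one factor of $\eps^{N-2}$, while every other Gamma factor (numerator and denominator) converges to a finite nonzero limit determined by $\ell$ and $N$. Multiplying by $\omega_{N-1}$ and by $K_{\ell,N}$ — which converges to a finite nonzero constant as $\eps\to0$, since $\|\varphi_\ell\|_2=1$ and the un-normalized radial function has a finite nonzero $L^2$-limit by continuous dependence of ODE solutions on parameters (as in the proof of Lemma~\ref{le:conv}) — yields $(1,\varphi_\ell)=M_{\ell,N}\eps^{N-2}+o(\eps^{N-2})$ with $M_{\ell,N}$ independent of $\eps$, and the explicit value of $M_{\ell,N}$ can be read off from \eqref{eq:int_even_2L}--\eqref{eq:int_odd_2L-1} according to the parity of $\ell$ and $N$.

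I expect the main obstacle to be bookkeeping rather than conceptual: one must carefully track which of the two Gamma arguments degenerates (this depends on the parity of $\ell$, and separately on whether $N$ is even or odd, because $\mu$ changes sign), verify that exactly one of them does so (the other stays away from the non-positive integers uniformly in small $\eps$, which needs the sign/size information $0\le C_\ell$ is \emph{not} actually required — only that $C_\ell\eps^{N-2}$ is a genuine perturbation), and confirm that none of the numerator Gamma factors simultaneously blows up or vanishes. A secondary technical point is the justification that $K_{\ell,N}$ stays bounded and bounded away from zero; this follows from the explicit Legendre representation together with the fact that $\nu_\ell$ converges, so that $P_{\nu_\ell}^{\mu}(\cos\theta)/(\sin\theta)^{(N-2)/2}$ converges uniformly on compact subsets of $[0,\pi)$ to a nonzero limit, and the contribution of the shrinking collar near $\theta=\pi$ to $\|\varphi_\ell\|_2^2$ is negligible. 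Once these parity cases are organized, the result is immediate from the residue expansion of $1/\Gamma$.
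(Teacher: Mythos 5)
Your analysis of the main term is essentially the paper's: reduce to the radial eigenfunctions, apply the definite-integral formula \eqref{eq:definite} with $\alpha=N/2+1$, and extract one factor $\eps^{N-2}$ from the reciprocal of a Gamma factor whose argument approaches a non-positive integer, with the constants read off as in \eqref{eq:int_even_2L}--\eqref{eq:int_odd_2L-1}; the remark about $K_{\ell,N}$ is also fine. The genuine gap is in the truncation step, i.e.\ your claim that
$\int_{(1-\eps)\pi}^{\pi}P_{\nu_\ell}^{\mu}(\cos\theta)\sin^{N/2}\theta\,d\theta=O(\eps^{N-1})$
``because the integrand is uniformly bounded and the interval has length $\pi\eps$''. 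First, the boundedness is not a consequence of $\nu_\ell\to\ell-1+\tfrac{N-2}{2}$ alone: $P_{\nu_\ell}^{\mu}(\cos\theta)$ blows up as $\theta\to\pi$, like $(1+\cos\theta)^{-(N-2)/4}$, with an additional logarithmic structure when $\mu=(N-2)/2$ is a positive integer (even $N$), where the connection formula \eqref{eq:conversion} is inapplicable and one must use the second solution $U$ via \eqref{eq:U} and \eqref{eq:inthyergeoU}. Controlling the product with $\sin^{N/2}\theta$ near the singular endpoint therefore requires precisely the endpoint asymptotics that the paper develops in Section~4; you replace that analysis by an assertion. Second, even granting uniform boundedness, your estimate does not close the argument: boundedness times interval length gives only $O(\eps)$, which is not $o(\eps^{N-2})$ for any $N\ge3$ (for $N=3$ it is of the same order as the main term, for $N\ge4$ it is larger). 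Even the sharper crude bound $|P_{\nu_\ell}^{\mu}(\cos\theta)|\lesssim(1+\cos\theta)^{-(N-2)/4}$ combined with $\sin^{N/2}\theta\lesssim(1+\cos\theta)^{N/4}\lesssim\eps^{N/2}$ only yields a tail $O(\eps^{2})$, still insufficient for $N\ge4$; so the claimed $O(\eps^{N-1})$ is neither derived nor derivable from what you wrote.

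What is missing is that the $1/\Gamma$-near-a-pole mechanism must be used a \emph{second} time, now for the connection coefficient multiplying the singular part of $P_{\nu_\ell}^{\mu}$ at $\theta=\pi$: in odd dimensions this coefficient contains $1/\Gamma\bigl(\tfrac{N-2}{2}-\nu_\ell\bigr)\simeq\eps^{N-2}$, and in even dimensions $1/\Gamma(m-1-\nu_\ell)\simeq\eps^{N-2}$ appears in the singular sum of \eqref{eq:U}. Only with this extra smallness, combined with the vanishing of $\sin^{N/2}\theta$ near $\theta=\pi$, does the tail come out of order $\eps^{N-2}\cdot\eps^{4}$, hence $o(\eps^{N-2})$, which is exactly the computation in the paper's Section~4 and is the real content of the proof of Lemma~\ref{prop:1}. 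Until you supply this endpoint analysis (split by the parity of $N$, with the logarithmic case treated separately), the proposal proves the asymptotics of $\int_0^{\pi}$ but not of $\int_0^{(1-\eps)\pi}$, i.e.\ not the statement of the lemma.
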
 

Hence, using Lemmas \ref{le:conv} and \ref{prop:1},  we obtain 
\begin{equation}\label{e-olittleN}
\|w_{\Omega_\eps}\|_{\infty}=\frac{1}{\lambda_1}+O(1),
\end{equation}
$N\ge 4$ exactly as in the case $N=3$ is Section \ref{s32}, and this completes the proof of Proposition~\ref{prop:torsion}.

%%%%%%%%%%%%%%% proof of Prop %%%%%%%%%%%%%%
\section{Proof of Lemma~\ref{prop:1}.} 

Although the integration in \eqref{eq:definite} is done over the interval $[0,\pi]$, we need to evaluate the integration over $[0,(1-\eps)\pi]$.  
Thus, we write 
$$
\int_0^{(1-\eps)\pi}P_\nu^{\mu}(\cos\theta)\sin^{N/2}\theta\, d\theta=\left(\int_0^{\pi}-\int_{(1-\eps)\pi}^{\pi}\right)P_\nu^{\mu}(\cos\theta)\sin^{N/2}\theta\, d\theta.
$$
In order to show that the second term is much smaller than the first term,  we use the asymptotic behaviour of the Legendre functions near $\theta=\pi$. To this end, we use the following formulas.

For this purpose, we regard the expression of the associated Legendre function in terms of the Gauss hypergeometric functions. 
We note that the associated Legendre function is expressed by using the Gauss hypergeometric function as below when $\mu$ is not a positive integer:
%%%%%%%%%%%%%%%%%%%%%% eq:hypergeo %%%%%%%%%%%%%%%%
\begin{equation}\label{eq:hypergeo}
P_\nu^{\mu}(t)=\frac{1}{\Gamma(1-\mu)}\left(\frac{1+t}{1-t}\right)^{\mu/2}F(-\nu,\nu+1,1-\mu;\frac{1-t}{2}).
\end{equation}
Note that the right-hand side has a singularity at $t=-1$. 
%%%%%%%%%%%%%%%% odd dim %%%%%%%%%%%%%%%
\subsection{Odd dimensional case.}
When $N$ is odd, as we take $\mu=-(N-2)/2$, from \eqref{eq:hypergeo}, we have
$$
P_\nu^{-(N-2)/2}(\cos\theta)=\frac{1}{\Gamma(N/2)}\left(\frac{1+\cos\theta }{1-\cos\theta}\right)^{-(N-2)/4}F(-\nu,\nu+1,\frac{N}{2};\frac{1-\cos\theta}{2}).
$$
From \eqref{eq:conversion} in Section~\ref{sec:gauss}, we have
$$
\begin{aligned}
&F(-\nu,\nu+1,\frac{N}{2};\frac{1-\cos\theta}{2})\vspace{10pt}\\
&=\frac{\Gamma(N/2)\Gamma(\frac{N-2}{2})}{\Gamma(\frac{N}{2}+\nu)\Gamma(\frac{N-2}{2}-\nu)}F(-\nu,\nu+1,2-\frac{N}{2};\frac{1+\cos\theta}{2})
\vspace{10pt}\\
&\hspace{1cm}+\frac{\Gamma(\frac{N}{2})\Gamma(-\frac{N-2}{2})}{\Gamma(-\nu)\Gamma(\nu+1)}\left(\frac{1+\cos\theta}{2}\right)^{(N-2)/2}F(\frac{N}{2}+\nu,\frac{N-2}{2}-\nu,\frac{N}{2};\frac{1+\cos\theta}{2}).
\end{aligned}
$$
In general, there holds $F(a,b,c;x)=1+(ab/c)x+O(x^2)$ near $x=0$ if $a,b,c \not\in\{0,-1,-2,\dots\}$. 
Thus, 
$F(-\nu,\nu+1,N/2;(1-\cos\theta)/2)$ is close to $1$ near $\theta=\pi$. 

In the odd dimensional case ($N=2m-1$), we have seen in the previous section that $\nu=j+m-\frac{5}{2}+C_j\eps^{N-2}+o(\eps^{N-2})$. Thus, according to 
(i) of Lemma~\ref{le:Gamma}, there holds
$$
\frac{1}{\Gamma(\frac{N-2}{2}-\nu)}=\frac{1}{\Gamma(1-j-C_j\eps^{N-2}+o(\eps^{N-2}))}\simeq \eps^{N-2}
$$
and we have 
$$
(1+\cos\theta)^{(N-2)/2}\lesssim \eps^{N-2}
$$ for $\theta\in [(1-\eps)\pi,\pi]$. 
We note that 
$$
\int_{(1-\eps)\pi}^{\pi}P_\nu^{\mu}(\cos\theta)\sin^{N/2}\theta\, d\theta\simeq \int_{(1-\eps)\pi}^{\pi}(1+\cos\theta)^{-(N-2)/4}\sin^{N/2}\theta\, d\theta
$$
holds. Letting $s=\cos\theta$, we have
$$
\begin{aligned}
\int_{(1-\eps)\pi}^{\pi}(1+\cos\theta)^{-(N-2)/4}\sin^{N/2}\theta\, d\theta&\simeq \int_{-1}^{\cos(1-\eps)\pi}(1+s)^{-(N-2)/4}(1-s^2)^{N/4+1/2}\, ds\vspace{10pt}\\
&\simeq \int_{-1}^{\cos(1-\eps)\pi}(1+s)\, ds\simeq \eps^4.
\end{aligned}
$$
Finally, we see that 
$$
\int_{(1-\eps)\pi}^{\pi}P_\nu^{\mu}(\cos\theta)\sin^{N/2}\theta\, d\theta\lesssim \eps^{N+2}.
$$

\subsection{Even dimensional case.}  
Let $N=2m$. Then we have  $\alpha=m-1-\nu$, $\beta=m+\nu$, $\ell=m$. Thus, \eqref{eq:hypergeo} is not applicable and \eqref{eq:P-Fnatu} has some difficulty as $\cos (1-\eps)\pi \to -1$. Thus, in the even dimensional case, the conversion formula  \eqref{eq:conversion} in 
Section~\ref{sec:gauss} does not hold. Instead we use the function $U$ defined as \eqref{eq:U} in 
Section~\ref{sec:gauss}. 
Concerning the relation $U$ and the Gauss hypergeometric function $F$, the conclusion in Beals and Wong~\cite{BW} in p. 276 is 
\begin{equation}\label{eq:inversion}
F(\alpha,\beta,\ell;x)=\Gamma(\ell)U(\alpha,\beta,\alpha+\beta+1-\ell;1-x)
\end{equation}
provided $\alpha+\beta+1-\ell$ is a non-positive integer.  
Also, we use the other expression as 
\begin{equation}\label{eq:inthyergeoU}
P_\nu^{m-1}(\cos\theta)=\frac{\Gamma(m+\nu)(m-1)!}{\Gamma(2+\nu-m)}(\sin^{m-1}\theta) U(m-1-\nu,m+\nu,m;\frac{1+\cos\theta}{2}).
\end{equation}
As $\theta\to \pi-0$, the leading term of $U$ is $(1+\cos\theta)^{1-m}$ when $m\ge 2$ and $\log (1+\cos \theta)$ when $m=1$. 
Also note that $\alpha=m-1-\nu=-j+1-C_j\eps^{N-2}+o(\eps^{N-2})<0$ if $j\ge 2$. Thus, $\Gamma(\alpha)$ in \eqref{eq:U} behaves like $\eps^{N-2}$ order. 
Hence, we see that
$$
\int_{(1-\eps)\pi}^{\pi}P_\nu^{m-1}(\cos\theta)\sin^{m}\theta\, d\theta\simeq \eps^{N-2} \int_{(1-\eps)\pi}^{\pi}(1+\cos\theta)^{1-m}\sin^{2m-1}\theta\, d\theta
$$
holds.
Letting $s=\cos\theta$, for $N=2m$ with $m\ge 2$, we have
$$
\begin{aligned}
\int_{(1-\eps)\pi}^{\pi}(1+\cos\theta)^{1-m}\sin^{2m-1}\theta\, d\theta&\simeq \int_{-1}^{\cos(1-\eps)\pi}(1+s)^{1-m}(1-s^2)^{m}\, ds\vspace{10pt}\\
&\simeq\int_{-1}^{\cos(1-\eps)\pi}(1+s)\, ds\simeq \eps^4.
\end{aligned}
$$
Thus, all the coefficients in Lemma~\ref{prop:1} are determined from \eqref{eq:int_even_2L},  \eqref{eq:int_even_2L-1},  \eqref{eq:int_odd_2L} and  \eqref{eq:int_odd_2L-1} and the proof is now complete.

%%%%%%%%%%%%%%% sec: gauss %%%%%%%%%%%%%%
\section{Properties of the Gauss hypergeometric functions.}\label{sec:gauss}
For the Gauss hypergeometric function $F(a,b,c;x)$, there is a conversion formula, which is useful for the analysis on an odd dimensional case.

Let $a,b,c$ be real numbers such that none of $c$, $a+b+1-c$ and $c+1-a-b$ is a non-positive integer. Then for $x\in (-1,1)$, there holds
%%%%%%%%%%%%%%%%%%%%%% eq:conversion %%%%%%%%%%%%%%%%%%%%%%%
\begin{equation}\label{eq:conversion}  
\begin{aligned}
F(a,b,c;x)&=\frac{\Gamma(c)\Gamma(c-a-b)}{\Gamma(c-a)\Gamma(c-b)}F(a,b,a+b+1-c;1-x)
\vspace{10pt}\\
&\hspace{1cm}+\frac{\Gamma(c)\Gamma(a+b-c)}{\Gamma(a)\Gamma(b)}(1-x)^{c-a-b}F(c-a,c-b,1+c-a-b;1-x).
\end{aligned}
\end{equation}
However, this conversion formula is not valid for an even dimensional case. To overcome this difficulty, we introduce 
the other function which is an independent solution to the Gauss hypergeometric differential equation.
The singularity can be controlled according to a device found e.g. in Section 8.4 (pp. 274-- 276) of the book by Beals and Wong \cite{BW} by means of the function $U(\alpha,\beta,\ell;x)$. This function solves the hypergeometric differential equation 
$$
x(1-x)\frac{d^2F}{dx^2}+\{\ell-(\alpha+\beta+1)x\}\frac{dF}{dx}-\alpha\beta F=0
$$
and is linearly independent of $F(\alpha,\beta,\ell;x)$. 
\par
Let $\alpha,\beta$ be non-integer values and $\ell$ be a positive integer.
The function $U(\alpha,\beta,\ell,x)$ is defined as follows
%%%%%%%%%%%%%%%%%%%%% eq:U %%%%%%%%%%%%%%%%%%%%%%%
\begin{equation}\label{eq:U}
\begin{array}{l}
U(\alpha,\beta,\ell;x)\vspace{10pt}\\
=\displaystyle{\frac{(-1)^\ell}{\Gamma(\alpha+1-\ell)\Gamma(\beta+1-\ell)(\ell-1)!}\times}\vspace{10pt}\\
\hspace{2mm}\displaystyle{\Bigg[F(\alpha,\beta,\ell;x)\log x}\vspace{10pt}\\
\hspace{4mm}\displaystyle{+\sum_{i=0}^{\infty}\frac{(\alpha)_i(\beta)_i}{(\ell)_i i!}
	\left\{\psi(\alpha+i)+\psi(\beta+i)-\psi(i+1)-\psi(\ell+i)\right\}x^i\Bigg]}
\vspace{10pt}\\
\hspace{6mm}
\displaystyle{+\frac{(\ell-2)!}{\Gamma(\alpha)\Gamma(\beta)}x^{1-\ell}\sum_{i=0}^{\ell-2}\frac{(\alpha+1-\ell)_i (\beta+1-\ell)_i}{(2-\ell)_i i!}x^i},
\end{array}
\end{equation} 
where $\psi(z)$ is the psi (or di-Gamma) function defined as 
$$
\psi(z)=\frac{\Gamma'(z)}{\Gamma(z)},
$$
$(z)_0=1$ and $(z)_i=z(z+1)\cdots(z+i-1)$ with $i\in {\mathbb N}.$ 
The last term in \eqref{eq:U} does not exist if $\ell=0, 1$. 
\par
%%%%%%%%%%%%%%%%% sec:Gamma %%%%%%%%%%%%%%%
\section{Properties of the Gamma function}\label{sec:Gamma}
In this subsection, we recall the definition and several properties of the Gamma function.  
One of the definitions of the Gamma function $\Gamma(z)$ is
$$
\Gamma(z)=\lim_{n\to\infty}\frac{(n-1)!n^z}{z(z+1)\cdots (z+n-1)}.
$$
By this definition we see that $\Gamma(z)$ has a pole of order 1 at $z=0,-1,-2,\dots$ and the local behaviour around one of these poles. 
\par 
The definition of the $\psi$-function is 
$$
\psi(z)=\frac{\Gamma' (z)}{\Gamma(z)}=\frac{d}{dz}\log \Gamma(z).
$$
The  $\psi$-function has the following property
\begin{equation}\label{eq:psi-add}
\psi(z+1)=\psi(z)+\frac{1}{z}.
\end{equation} 
The limiting behaviour of the Gamma function is as follows.
%%%%%%%%%%%%%%%%% Lemma 2.1 le:Gamma %%%%%%%%%%%%%%%%
\begin{lemma}[cf. Sections 1 and 3 in \cite{MUH}]\label{le:Gamma} Let $n$ and $k$ be nonnegative integers. Then:
	\begin{itemize}
		\item[{\rm (i)}] $\displaystyle{\lim_{\zeta\to 0}\zeta\Gamma(-n-\zeta)}=\displaystyle{\frac{(-1)^{n+1}}{n!}}.$
		\item[{\rm (ii)}] $\displaystyle{\lim_{\zeta\to 0}\zeta\psi(-n-\zeta)}=1.$ 
		\item[{\rm (iii)}] $\displaystyle{\lim_{\zeta\to 0}\frac{\psi(-n-\zeta)}{\Gamma(-k-\zeta)}=(-1)^{k+1}k!}.$
	\end{itemize}
\end{lemma}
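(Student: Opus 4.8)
The plan is to deduce all three limits from two elementary functional identities: $\Gamma(z+1)=z\Gamma(z)$, which is immediate from the product definition of $\Gamma$ recalled above, and \eqref{eq:psi-add}. In each case one peels off the pole by iterating the relevant identity until landing on a regular point.

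For (i), I would iterate $\Gamma(z+1)=z\Gamma(z)$ exactly $n+1$ times, starting from $z=-n-\zeta$ and moving up to $z=1-\zeta$, which yields
$$\Gamma(1-\zeta)=(-\zeta)(-1-\zeta)(-2-\zeta)\cdots(-n-\zeta)\,\Gamma(-n-\zeta).$$
Solving for $\Gamma(-n-\zeta)$ and multiplying by $\zeta$ cancels the factor $(-\zeta)$, leaving
$$\zeta\,\Gamma(-n-\zeta)=\frac{-\,\Gamma(1-\zeta)}{(-1-\zeta)(-2-\zeta)\cdots(-n-\zeta)}.$$
Since $\Gamma$ is continuous at $1$ with $\Gamma(1)=1$ and the denominator converges to $(-1)(-2)\cdots(-n)=(-1)^n n!$ as $\zeta\to 0$, the limit equals $-1/\big((-1)^n n!\big)=(-1)^{n+1}/n!$; for $n=0$ the product is empty and the limit is $-\Gamma(1)=-1$, consistent with the formula.

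For (ii), I would iterate \eqref{eq:psi-add} in the form $\psi(z)=\psi(z+1)-1/z$, again $n+1$ times from $z=-n-\zeta$ up to $z=1-\zeta$, obtaining after a reindexing of the partial-fraction sum
$$\psi(-n-\zeta)=\psi(1-\zeta)+\frac1\zeta+\sum_{i=1}^{n}\frac1{i+\zeta}.$$
Multiplying by $\zeta$, the term $\zeta\,\psi(1-\zeta)$ tends to $0$ because $\psi$ is regular at $1$, and $\zeta\sum_{i=1}^{n}\frac1{i+\zeta}\to 0$ as well, so only the middle term survives and $\lim_{\zeta\to0}\zeta\,\psi(-n-\zeta)=1$. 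For (iii), I would simply combine (i) and (ii): part (ii) gives $\psi(-n-\zeta)=\zeta^{-1}\big(1+o(1)\big)$ and part (i) gives $\Gamma(-k-\zeta)=\zeta^{-1}\tfrac{(-1)^{k+1}}{k!}\big(1+o(1)\big)$ as $\zeta\to0$; dividing, the $\zeta^{-1}$ factors cancel and the ratio tends to $\big(\tfrac{(-1)^{k+1}}{k!}\big)^{-1}=(-1)^{k+1}k!$.

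There is no genuine obstacle here, as everything is elementary; the only points requiring a little care are the sign bookkeeping in the iterated product in (i) and in the partial-fraction sum in (ii), together with the (standard) fact that $\Gamma$ and $\psi$ are analytic, hence bounded, in a neighbourhood of $z=1$, which is precisely what makes the non-singular contributions vanish after multiplication by $\zeta$.
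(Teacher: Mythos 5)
Your proof is correct: the sign bookkeeping in the iterated recursion for part (i), the partial-fraction expansion $\psi(-n-\zeta)=\psi(1-\zeta)+\tfrac1\zeta+\sum_{i=1}^{n}\tfrac{1}{i+\zeta}$ in part (ii), and the cancellation of the $\zeta^{-1}$ factors in part (iii) (using $((-1)^{k+1})^{-1}=(-1)^{k+1}$) all check out, including the empty-product case $n=0$. The paper itself does not prove this lemma at all: it is stated as a known fact with a reference to the handbook of Moriguchi--Udagawa--Hitotsumatsu, preceded only by the remark that the limit-product definition of $\Gamma$ exhibits simple poles at the non-positive integers. So your route is genuinely different in that it makes the statement self-contained, deriving everything from the two functional equations $\Gamma(z+1)=z\Gamma(z)$ and $\psi(z+1)=\psi(z)+\tfrac1z$ together with regularity of $\Gamma$ and $\psi$ at $z=1$; this is exactly the kind of short argument one would insert if one did not wish to rely on the cited tables, and it also makes transparent where the residues $\frac{(-1)^{n+1}}{n!}$ and the coefficient $1$ in (ii) come from. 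The only point worth stating explicitly, since you invoke it, is that $\Gamma(z+1)=z\Gamma(z)$ and $\Gamma(1)=1$ do follow readily from the limit-product definition recalled in the paper, so no additional input beyond what the paper records is needed.
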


\smallskip
\noindent {\bf Acknowledgements.} 
The authors are grateful to the anonymous referee for their comments which helped to improve the exposition in the paper.
Part of this research was carried out while VM was visiting Osaka Prefecture University. 
VM thanks the Department of Mathematical Sciences for its support and hospitality.  YK is supported in part by JSPS KAKENHI Grant Number 19K03588.

\end{document}